\documentclass[preprint,12pt]{elsarticle}

\usepackage[T1]{fontenc}
\usepackage[utf8]{inputenc}
\usepackage{lmodern}
\usepackage{amsmath,amssymb,amsthm,mathtools}
\usepackage{microtype}
\usepackage[a4paper,margin=24mm,footskip=10mm]{geometry}
\usepackage{etoolbox}
\usepackage{enumitem}
\usepackage{needspace}
\usepackage{xcolor}
\usepackage{hyperref}

\biboptions{numbers,sort&compress}
\hypersetup{
  colorlinks=true,
  linkcolor=blue!45!black,
  citecolor=blue!45!black,
  urlcolor=blue!45!black,
  pdftitle={A non-sober core-compact meet-continuous dcpo},
  pdfsubject={Scott topology, meet-continuity, core-compactness, local compactness and sobriety},
  pdfkeywords={dcpo, Scott topology, meet-continuity, core-compactness, sobriety, localization}
}
\setlist[enumerate]{label=\textup{(\roman*)},leftmargin=*,topsep=3pt,itemsep=1pt,parsep=0pt}
\setlist[itemize]{leftmargin=*,topsep=3pt,itemsep=1pt,parsep=0pt}
\makeatletter
\renewcommand\section{\@startsection{section}{1}{\z@}%
  {-2.2ex plus -.7ex minus -.3ex}{1ex plus .3ex}%
  {\normalfont\normalsize\bfseries\boldmath}}
\renewcommand\subsection{\@startsection{subsection}{2}{\z@}%
  {-1.7ex plus -.5ex minus -.2ex}{.65ex plus .2ex}%
  {\normalfont\normalsize\itshape}}
\patchcmd{\pprintMaketitle}
  {\normalsize\elsauthors\par\vskip10pt
   \footnotesize\itshape\elsaddress\par\vskip36pt}
  {\ifx\elsauthors\@empty\else
     \normalsize\elsauthors\par\vskip10pt\fi
   \ifx\elsaddress\@empty\else
     \footnotesize\itshape\elsaddress\par\vskip36pt\fi}
  {}{\PackageWarning{compactlayout}{Title spacing patch not applied}}
\makeatother

\AtBeginDocument{%
  \setlength{\abovedisplayskip}{7pt plus 2pt minus 2pt}%
  \setlength{\belowdisplayskip}{7pt plus 2pt minus 2pt}%
  \setlength{\abovedisplayshortskip}{3pt plus 2pt}%
  \setlength{\belowdisplayshortskip}{5pt plus 2pt minus 2pt}%
}
\AtBeginEnvironment{proof}{\setlength{\topsep}{4pt plus 1pt minus 1pt}}

\newtheoremstyle{compactplain}{5pt plus 1pt minus 1pt}{5pt plus 1pt minus 1pt}
  {\itshape}{}{\bfseries}{.}{.5em}{}
\newtheoremstyle{compactdefinition}{5pt plus 1pt minus 1pt}{5pt plus 1pt minus 1pt}
  {}{}{\bfseries}{.}{.5em}{}
\newtheoremstyle{compactremark}{4pt plus 1pt minus 1pt}{4pt plus 1pt minus 1pt}
  {}{}{\itshape}{.}{.5em}{}
\theoremstyle{compactplain}
\newtheorem{theorem}{Theorem}[section]
\newtheorem{lemma}[theorem]{Lemma}
\newtheorem{proposition}[theorem]{Proposition}
\newtheorem{corollary}[theorem]{Corollary}
\theoremstyle{compactdefinition}
\newtheorem{definition}[theorem]{Definition}
\theoremstyle{compactremark}
\newtheorem{remark}[theorem]{Remark}

\DeclareMathOperator{\height}{ht}
\DeclareMathOperator{\loc}{loc}
\newcommand{\NN}{\mathbb N}

\makeatletter
\let\ps@pprintTitle\ps@plain
\makeatother

\begin{document}
\begin{frontmatter}
\title{A non-sober core-compact meet-continuous dcpo}
\author[scu]{Yuxu Chen}
\ead{chenyuxu@scu.edu.cn}
\affiliation[scu]{organization={School of Mathematics, Sichuan University},
  city={Chengdu},
  country={China}}

\begin{abstract}
We construct a meet-continuous dcpo whose Scott space is core-compact
but neither sober nor locally compact.  The example is
a proper sub-dcpo of a locally compact sober complete lattice, and
restriction induces an isomorphism between their lattices of Scott-closed
sets. It follows that a locally compact sober dcpo need not be determined, among all
dcpos, by its Scott-closed-set lattice. 
\end{abstract}
\begin{keyword}
Scott topology \sep meet-continuity \sep core-compactness \sep sobriety
\sep local compactness \sep \(C_\sigma\)-uniqueness
\MSC[2020] 06B35 \sep 54D10 \sep 54D45 \sep 03E17
\end{keyword}
\end{frontmatter}

\section{Introduction}\label{sec:introduction}

A dcpo carries a canonical topology, the Scott topology, which connects
directed approximation with non-Hausdorff topology.  Continuous and
quasicontinuous dcpos have sober Scott spaces, but arbitrary Scott spaces
need not be sober \cite{AbramskyJung1994,CLD2003,Johnstone1981}.  A
natural problem is whether a topological approximation property can
replace continuity.  In particular, Jia asked whether every
core-compact meet-continuous dcpo is sober
\cite[Question~4.5.8]{Jia2018}.

We answer this question negatively.  We construct a meet-continuous dcpo
\(D\) such that \(\Sigma D\) is compact, second countable and
core-compact, but not sober.  He and Zhang recently proved that every
second-countable locally compact dcpo is sober
\cite[Theorem~4.13]{HeZhang2026}.  Hence the same example is not locally
compact.  Thus core-compactness and local compactness differ even for
second-countable Scott spaces of meet-continuous dcpos.

The construction also gives a reconstruction counterexample.  For a
dcpo \(P\), let \(\Gamma(P)\) denote the lattice of Scott-closed subsets.
The dcpo \(P\) is \(C_\sigma\)-unique if
\(\Gamma(P)\cong\Gamma(Q)\) implies \(P\cong Q\) for every dcpo \(Q\)
\cite{ChenHe2026}.  We construct a complete lattice \(L\) and a proper
sub-dcpo \(D\subsetneq L\) for which
\[
  \sigma(L)\cong\sigma(D),\qquad
  \Gamma(L)\cong\Gamma(D).
\]
The lattice \(L\) has a greatest element, whereas \(D\) has none, so
they are not order isomorphic.  Since \(\Sigma L\) is locally compact
and sober, this shows that a locally compact sober dcpo need not be
\(C_\sigma\)-unique.

The proof has three components.  First, an abstract transfer theorem
shows that two elementary approximation properties of a sub-dcpo
\(D\subseteq L\) force restriction to be a frame isomorphism
\(\sigma(L)\cong\sigma(D)\).  Core-compactness, second countability,
meet-continuity and non-sobriety then follow with short 
arguments.  Second, we construct an explicit compact topological lattice
\(L\), inspired by Lawson's examples
\cite{Lawson1970,Lawson1973,GL2022}.  Third, we select \(D\) by a
transfinite recursion.  At each stage we insert either one point or one
increasing sequence converging to a point selected earlier.  A stage
rank then makes closure under directed suprema transparent.

\section{Preliminaries}\label{sec:preliminaries}

Put \(\NN=\{0,1,2,\ldots\}\) and \(\mathfrak c=2^{\aleph_0}\).  All
directed sets are nonempty.  For a subset \(A\) of a poset \(P\), write
\(
 \downarrow_PA=\{x\in P:x\leq a\text{ for some }a\in A\} \) and \(
 \uparrow_PA=\{x\in P:a\leq x\text{ for some }a\in A\}.
\)
Subscripts are omitted when the ambient poset is clear.  
A dcpo is a poset in which every directed subset has a supremum.  A subset
\(D\subseteq P\) of a dcpo is a \emph{sub-dcpo} if every directed
\(E\subseteq D\) has the same supremum in \(D\) and in \(P\).

A subset \(U\) of a dcpo \(P\) is Scott open if it is upper and, whenever
\(E\subseteq P\) is directed with \(\sup E\in U\), one has
\(E\cap U\neq\varnothing\).  The Scott space is denoted by \(\Sigma P\),
its open-set frame by \(\sigma(P)\), and its Scott-closed-set lattice by
\(\Gamma(P)\).  
 A space is
core-compact exactly when its open-set lattice is continuous; local compactness implies core-compactness \cite{GL2013}.

A dcpo \(P\) is \emph{meet-continuous} if, for every directed
\(E\subseteq P\) and every \(p\leq\sup E\),
\(
 p\in\operatorname{cl}_{\Sigma P}
       (\downarrow p\cap\downarrow E).
\)
This definition does not require binary meets
\cite{KouLiuLuo2003,Jia2018}.  A frame isomorphism preserves arbitrary
joins and finite meets, including the empty meet.

\section{A transfer principle}\label{sec:transfer}

The following theorem give some sufficient conditions for a sub-dcpo \(D\subseteq L\)  being meet-continuous, core-compact, second countable and non-sober.

\begin{theorem}\label{thm:transfer}
Let \(L\) be a dcpo with least and greatest elements \(\bot_L\) and
\(\top_L\).  Assume that \(\Sigma L\) is second countable and
core-compact.  Let \(D\subseteq L\) be a sub-dcpo such that
\(\bot_L\in D\) and \(\top_L\notin D\).  Suppose that:
\begin{enumerate}
\item\label{cond:density}
for every \(p\in V\in\sigma(L)\), there is
\(d\in D\cap V\) with \(d\leq p\);
\item\label{cond:common-lower}
if \(U\in\sigma(D)\), \(q\in U\), and \(E\subseteq L\) is directed
with \(q\leq\sup_LE\), then there are \(u\in U\) and \(e\in E\)
such that \(u\leq q\) and \(u\leq e\).
\end{enumerate}
Then restriction is a frame isomorphism
\[
  \rho:\sigma(L)\longrightarrow\sigma(D),\qquad
  \rho(V)=V\cap D,
\]
whose inverse is \(U\mapsto\uparrow_LU\).  Moreover, \(D\) is
meet-continuous and \(\Sigma D\) is compact, second countable,
core-compact and non-sober.  The lattices \(\Gamma(L)\) and
\(\Gamma(D)\) are isomorphic, but \(L\) and \(D\) are not order
isomorphic.
\end{theorem}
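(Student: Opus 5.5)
We first show that $\rho$ is well defined and that it is a bijection with inverse $U\mapsto\uparrow_LU$; the remaining claims are then read off from $\sigma(L)\cong\sigma(D)$.

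\emph{Well-definedness of $\rho$.} If $V\in\sigma(L)$, then $V\cap D$ is an upper set in $D$. Directed suprema in $D$ agree with those in $L$ because $D$ is a sub-dcpo. Hence $V\cap D$ is Scott open in $D$.

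\emph{$\rho$ is injective.} For $V\in\sigma(L)$ we claim $V=\uparrow_L(V\cap D)$. The inclusion $\supseteq$ holds because $V$ is upper. For $\subseteq$, condition \ref{cond:density} gives, for each $p\in V$, some $d\in D\cap V$ with $d\le p$.

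\emph{$\uparrow_LU$ is Scott open.} Let $U\in\sigma(D)$; the set $\uparrow_LU$ is upper. Suppose $E\subseteq L$ is directed and $\sup_LE\ge q$ for some $q\in U$. Condition \ref{cond:common-lower} yields $u\in U$ and $e\in E$ with $u\le e$. Thus $e\in\uparrow_LU$, so $\uparrow_LU\in\sigma(L)$.

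\emph{$\rho$ is surjective.} It remains to check $\uparrow_LU\cap D=U$, which holds since $U$ is upper in $D$.

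\emph{Frame isomorphism.} Both maps are monotone and mutually inverse, so $\rho$ is an order isomorphism of complete lattices. It therefore preserves arbitrary joins and finite meets, including the top element $L\mapsto D$. Complements then give $\Gamma(L)\cong\Gamma(D)$ via $C\mapsto C\cap D$.

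\emph{Topological consequences.}
\begin{itemize}
\item Core-compactness transfers because continuity of the open-set lattice is an order-theoretic property.
\item Second countability transfers because $\{B\cap D\}$ is a base whenever $\{B\}$ is.
\item Compactness of $\Sigma D$: any open cover of $D$ lifts to a cover $\{\uparrow_LU_i\}$ of $L$, since the join of the $U_i$ corresponds to $L$. Because $\bot_L\in D$, some $U_i$ contains $\bot_L$, so $U_i=D$. (Alternatively, $\bot_L$ lies in every nonempty closed set.)
\end{itemize}

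\emph{Non-sobriety.} Consider $F=\{U\in\sigma(D):\top_L\in\uparrow_LU\}$, which equals $\{U: U\neq\varnothing\}$ because every nonempty $\uparrow_LU$ contains $\top_L$. Equivalently, take the closed set $D=\downarrow_L\top_L\cap D$, i.e. the whole space $D$, which is irreducible. It is irreducible because the intersection of two nonempty opens is nonempty: their lifts both contain $\top_L$, so $\uparrow_LU_1\cap\uparrow_LU_2\neq\varnothing$, and applying $\rho$ gives $U_1\cap U_2\ne\varnothing$. If $D$ were sober, then $D=\operatorname{cl}\{x\}=\downarrow_Dx$ for some $x\in D$, so $x$ would be a greatest element of $D$. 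Then every nonempty open $U$ would contain $x$. Now $\top_L\in\uparrow_LU$ for every nonempty $U$; apply this to $U=D\setminus\downarrow_Dy$ for suitable $y$. More simply, $\rho^{-1}$ of the closed set $\downarrow_Dx$ is the Scott closure of $\downarrow_Lx$, namely $\downarrow_Lx$, which must equal $L$. This forces $x=\top_L\in D$, a contradiction. The same argument shows $D$ has no top element, while $L$ does, so $L\not\cong D$.

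\emph{Meet-continuity.} This is the step I expect to be the main obstacle. Let $E\subseteq D$ be directed, $p\le\sup E$, and $p\in U\in\sigma(D)$. We must find a point of $U\cap\downarrow p\cap\downarrow E$. Apply \ref{cond:common-lower} with $q=p$ and $E$ viewed in $L$. This gives $u\in U$ and $e\in E$ with $u\le p$ and $u\le e$. Then $u\in U\cap\downarrow_Dp\cap\downarrow_DE$. Hence $p$ lies in the Scott closure in $D$ of $\downarrow p\cap\downarrow E$, as required. The care needed is only that $u\in D$ and that the closure is taken in $\Sigma D$; both hold because $U\subseteq D$.
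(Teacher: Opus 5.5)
Your proof is correct and follows essentially the same route as the paper. The one difference is cosmetic: to exclude a greatest element \(x\in D\), you use injectivity of \(C\mapsto C\cap D\) on \(\Gamma(L)\), noting that \(\downarrow_L x\) and \(L\) have the same trace \(D\), whereas the paper applies condition (i) to a Scott-open set containing \(\top_L\) but not \(g\); these amount to the same argument, and you should delete the abandoned aside about \(U=D\setminus\downarrow_D y\).
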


\begin{proof}
If \(V\in\sigma(L)\), then \(V\cap D\) is Scott open in \(D\): if a
directed \(E\subseteq D\) has supremum in \(V\cap D\), the sub-dcpo
property gives \(\sup_DE=\sup_LE\), and hence \(E\cap V\neq\varnothing\).

Conversely, let \(U\in\sigma(D)\) and put
\(\widehat U=\uparrow_LU\).  This set is upper.  If a directed
\(E\subseteq L\) satisfies \(\sup_LE\in\widehat U\), choose
\(q\in U\) with \(q\leq\sup_LE\).  Condition
\ref{cond:common-lower} gives \(u\in U\) and \(e\in E\) with
\(u\leq e\), so \(e\in\widehat U\).  Thus \(\widehat U\) is Scott
open in \(L\).

Since \(U\) is upper in \(D\),
\(
 (\uparrow_LU)\cap D=U.
\)
If \(V\in\sigma(L)\), condition \ref{cond:density} gives
\(
 \uparrow_L(V\cap D)=V:
\)
the nontrivial inclusion follows by choosing, for each \(p\in V\), a
point \(d\in D\cap V\) below \(p\).  Hence restriction and upward
extension are inverse.  Restriction preserves arbitrary unions and
finite intersections, so it is a frame isomorphism.  In particular,
the intrinsic Scott topology of \(D\) is the subspace topology inherited
from \(\Sigma L\).

The frame \(\sigma(D)\) is continuous because \(\sigma(L)\) is, so
\(\Sigma D\) is core-compact.  The traces of a countable basis of
\(\Sigma L\) form a countable basis of \(\Sigma D\).  Since
\(\bot_L\in D\), every Scott-open cover of \(D\) contains a member
containing \(\bot_L\), and that upper set is all of \(D\).  Thus
\(\Sigma D\) is compact.

To prove meet-continuity, let \(E\subseteq D\) be directed,
\(q\leq\sup_DE\), and let \(U\) be a Scott-open neighbourhood of
\(q\) in \(D\).  Since \(D\) is a sub-dcpo,
\(\sup_DE=\sup_LE\).  Condition \ref{cond:common-lower} yields
\(u\in U\cap\downarrow q\cap\downarrow E\).  Hence every Scott-open
neighbourhood of \(q\) meets
\(\downarrow q\cap\downarrow E\), which is precisely
meet-continuity.

Any two nonempty Scott-open subsets of \(L\) contain \(\top_L\).
The frame isomorphism therefore implies that any two nonempty
Scott-open subsets of \(D\) intersect.  Thus \(D\), as a closed subset
of itself, is irreducible.  We show that \(D\) has no greatest element.
For \(g\in D\), one has \(g\neq\top_L\).  Since the specialization
order of \(\Sigma L\) is the order of \(L\), there is a Scott-open set
\(V\) with \(\top_L\in V\) and \(g\notin V\).  By condition
\ref{cond:density}, choose \(d\in D\cap V\) with \(d\leq\top_L\).
If \(d\leq g\), upperness of \(V\) would imply \(g\in V\), a
contradiction.  Hence \(g\) is not greatest.

If the irreducible closed set \(D\) had a generic point \(g\), then
\(D=\operatorname{cl}_{\Sigma D}\{g\}=\downarrow_Dg\), making \(g\)
a greatest element.  Therefore \(\Sigma D\) is not sober.  Taking
complements transports the frame isomorphism to an order isomorphism
\(\Gamma(L)\cong\Gamma(D)\).  Finally, \(L\) has a greatest element
and \(D\) does not, so the two dcpos are not order isomorphic.
\end{proof}

\begin{corollary}\label{cor:not-locally-compact}
Under the hypotheses of Theorem~\ref{thm:transfer}, \(\Sigma D\) is not
locally compact.
\end{corollary}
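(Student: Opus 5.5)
The plan is to argue by contradiction, invoking the theorem of He and Zhang quoted in the introduction: every second-countable locally compact dcpo is sober \cite[Theorem~4.13]{HeZhang2026}. Theorem~\ref{thm:transfer} already gives that \(\Sigma D\) is second countable and not sober. If \(\Sigma D\) were locally compact, then \(D\) would be a second-countable locally compact dcpo. He and Zhang's result would then make \(\Sigma D\) sober, contradicting Theorem~\ref{thm:transfer}. Hence \(\Sigma D\) is not locally compact.

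The only point needing care is that the hypotheses of \cite[Theorem~4.13]{HeZhang2026} match our setting. Their result concerns the Scott topology of a dcpo. The theorem above is about the intrinsic Scott space \(\Sigma D\), and \(D\) is a genuine dcpo: it is a sub-dcpo of \(L\), so directed subsets of \(D\) have suprema in \(D\). So I would state explicitly that both "second countable" and "locally compact" refer to \(\Sigma D\). This is the same topology as the subspace topology from \(\Sigma L\), by the frame isomorphism in Theorem~\ref{thm:transfer}, although that identification is not even needed here.

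If one wanted to avoid the external citation, a direct route would go as follows. In a locally compact space, every irreducible closed set \(D\) together with the filter of open sets meeting it gives a Scott-open filter in \(\sigma(D)\). In our situation, the open sets meeting \(D\) are exactly the nonempty opens, and they form a filter by irreducibility. One would then apply the Hofmann--Mislove theorem, which requires sobriety and is therefore circular, so this route does not obviously work. Relying on \cite{HeZhang2026} is the intended and expected argument; there is no real obstacle beyond checking the hypotheses.
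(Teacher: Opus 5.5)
Your argument is correct and is the same as the paper's proof. Theorem~\ref{thm:transfer} gives that \(\Sigma D\) is second countable and not sober, and He and Zhang's theorem then rules out local compactness; the abandoned direct route in your last paragraph is not needed and can be dropped.
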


\begin{proof}
The space \(\Sigma D\) is second countable and non-sober by
Theorem~\ref{thm:transfer}.  He and Zhang proved that every
second-countable locally compact dcpo has a sober Scott space
\cite[Theorem~4.13]{HeZhang2026}.  Therefore \(\Sigma D\) cannot be
locally compact.
\end{proof}

\section{The ambient complete lattice
\texorpdfstring{\(L\)}{L}}\label{sec:ambient}

We now construct the ambient complete lattice.  The only features used later are its explicit coordinate order, a compact metrizable auxiliary topology, the description of its Scott topology, and the fact that binary meet preserves directed suprema.

\subsection{The order and the coordinate bounds}\label{subsec:ambient-strategy}

Coordinate indices are integers \(j\geq2\). Define
\begin{equation}\label{eq:blocks}
 d_j=2^{2^j},\qquad N_j=\{1,2,3,\ldots,d_j\}.
\end{equation}
Write
\[
 \mathcal C=\prod_{j\geq2}\mathcal P(N_j).
\]
An element \(A=(A_j)_{j\geq2}\in\mathcal C\) will be called a
\emph{configuration}. Thus a configuration is a sequence of subsets
\(A_j\subseteq N_j\). Equip \(\mathcal C\) with the product order
\[
 A\preceq B\quad\Longleftrightarrow\quad
 A_j\subseteq B_j\text{ for every }j\geq2.
\]
For each \(j\geq2\), define \(c_j:\mathcal P(N_j)\to[0,1]\) by
\begin{equation}\label{eq:cj}
 c_j(E)=\max\left\{0,1-\frac{\log_2(1+|N_j\setminus E|)}{j}\right\}.
\end{equation}
If \(E\subseteq H\subseteq N_j\), then
\(|N_j\setminus H|\leq|N_j\setminus E|\), so \(c_j(E)\leq c_j(H)\).
Also \(c_j(\varnothing)=0\) and \(c_j(N_j)=1\).
Define \(c:\mathcal C\to[0,1]\) by
\[
 c(A)=\inf_{j\geq2}c_j(A_j).
\]
We define
\begin{equation}\label{eq:ambient}
 L=\left\{(t,A)\in[0,1]\times\mathcal C:
                   t\leq c_j(A_j)\text{ for every }j\right\},
\end{equation}
where \(A=(A_j)_{j\geq2}\). For an element \(x=(t,A)\in L\), its
\emph{height} is the value of its first coordinate. Thus
\(\height:L\to[0,1]\) is defined by \(\height(t,A)=t\).
For fixed \(A\in\mathcal C\), one has \((t,A)\in L\) exactly when
\(0\leq t\leq c(A)\); hence \(c(A)\) is the largest permissible height
at \(A\). For coordinate comparisons, we also write
\(x=(t_x,A^x)\), where \(t_x=\height(x)\) and
\(A^x=(A_j^x)_{j\geq2}\in\mathcal C\).
The order is defined by
\begin{equation}\label{eq:order}
 (t,A)\leq(s,B)\quad\Longleftrightarrow\quad
 t\leq s\ \text{and}\ A\preceq B.
\end{equation}
The least and greatest elements are
\[
 \bot_L=(0,(\varnothing)_j),\qquad \top_L=(1,(N_j)_j).
\]
Height is a real coordinate in \([0,1]\), not an integer rank. An inequality
between heights does not by itself imply an order comparison, and points
with the same height need not be incomparable.

For \(0<r\leq1\) and \(j\geq2\), put
\[
 q_r(j)=\lfloor2^{(1-r)j}\rfloor-1.
\]
For every \(E\subseteq N_j\), we have
\begin{equation}\label{eq:complement-bound}
 c_j(E)\geq r\ \Longleftrightarrow\
 |N_j\setminus E|\leq q_r(j).
\end{equation}
Thus positive height bounds the number of omitted elements in every
coordinate. At height \(1\), every coordinate must be \(N_j\), so there is
only the point \(\top_L\). At height \(0\), the condition is automatic and
all sequences of subsets are allowed. The equivalence in
\eqref{eq:complement-bound} is not asserted for \(r=0\).

Let
\(
X=[0,1]\times\mathcal C,
\)
where \([0,1]\) carries its usual topology and each finite set
\(\mathcal P(N_j)\) carries the discrete topology.
Each factor is compact, metrizable, and Hausdorff.
Since \(X\) is a countable product of compact metrizable Hausdorff
spaces, it is compact by Tychonoff's theorem and is also metrizable
and Hausdorff \cite[Sections~21 and~37]{Munkres2000}.

Let \(\tau\) be the subspace topology on \(L\) inherited from \(X\).
If \(x=(t,A)\in L\), then a basic \(\tau\)-neighbourhood of \(x\)
is of the form
\[
U(x;I,J)
=
\left\{
(s,B)\in L:
s\in I,\ B_j=A_j\text{ for every }j\in J
\right\},
\]
where \(I\) is a relatively open interval in \([0,1]\) containing \(t\),
and \(J\subseteq\{2,3,\ldots\}\) is finite.
Indeed, the product topology allows restrictions on only finitely
many coordinates, and each discrete factor \(\mathcal P(N_j)\) has
the singleton \(\{A_j\}\) as an open neighbourhood of \(A_j\).

The functions \(c_j\) are chosen for two reasons.
First, deleting one element decreases \(c_j\) by at most \(1/j\), and
for \(E,H\subseteq N_j\), the value \(c_j(E\cap H)\) differs from
\(\min\{c_j(E),c_j(H)\}\) by at most \(1/j\); see
Lemma~\ref{lem:loss}.
This allows changes in all sufficiently large coordinates after an arbitrarily small decrease of
height. It also makes binary meet continuous for \(\tau\).
Second, the exponential bound in \eqref{eq:complement-bound}, together with the larger sets
\(N_j\), gives the cardinal estimates used in the selection of \(D\).

\begin{lemma}\label{lem:loss}
For \(E,H\subseteq N_j\) and \(z\in N_j\),
\begin{align}
  0&\leq c_j(E)-c_j(E\setminus\{z\})\leq\frac1j,
       \label{eq:deletion-loss}\\
  0&\leq\min\{c_j(E),c_j(H)\}-c_j(E\cap H)\leq\frac1j.
       \label{eq:intersection-loss}
\end{align}
\end{lemma}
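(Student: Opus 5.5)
The plan is to reduce both inequalities to a single estimate on the function \(f(m)=\max\{0,1-\log_2(1+m)/j\}\), where \(m=|N_j\setminus E|\). With this notation \(c_j(E)=f(|N_j\setminus E|)\). The function \(f\) is nonincreasing in \(m\). The key claim is
\[
 0\leq f(m)-f(m')\leq\frac1j\qquad\text{whenever }m\leq m'\leq 2m+1 .
\]
To prove it, first note that \(g(m)=1-\log_2(1+m)/j\) satisfies
\[
 g(m)-g(m')=\frac{\log_2\bigl((1+m')/(1+m)\bigr)}{j}\leq\frac1j ,
\]
because \(1+m'\leq 2(1+m)\). Then observe that \(t\mapsto\max\{0,t\}\) is monotone and \(1\)-Lipschitz, so the same bound passes from \(g\) to \(f\).

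For \eqref{eq:deletion-loss}, monotonicity of \(c_j\) (already noted after \eqref{eq:cj}) gives the left inequality. For the right inequality, removing \(z\) raises the complement size from \(m\) to at most \(m+1\), and \(m+1\leq 2m+1\) for every \(m\geq0\). The claim then yields the bound \(1/j\).

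For \eqref{eq:intersection-loss}, the left inequality again follows from monotonicity, since \(E\cap H\subseteq E\) and \(E\cap H\subseteq H\). For the right inequality, write \(a=|N_j\setminus E|\) and \(b=|N_j\setminus H|\), and assume without loss of generality \(a\leq b\). Then \(\min\{c_j(E),c_j(H)\}=f(b)\). By De Morgan,
\[
 |N_j\setminus(E\cap H)|=|(N_j\setminus E)\cup(N_j\setminus H)|\leq a+b\leq 2b\leq 2b+1 .
\]
Applying the claim with \(m=b\) and \(m'=|N_j\setminus(E\cap H)|\), which satisfy \(b\leq m'\leq 2b+1\), gives the bound \(1/j\).

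The only point needing care is the truncation at \(0\), and the \(1\)-Lipschitz observation handles it. I expect no real obstacle; the essential idea is that the complement size at most doubles, and \(\log_2\) turns that doubling into a loss of at most one unit, hence \(1/j\) after dividing by \(j\).
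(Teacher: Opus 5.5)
Your proposal is correct and follows essentially the same route as the paper: under deletion the complement size grows by at most one, and under intersection it at most doubles, so \(\log_2\) loses at most one unit; the truncation is handled by the monotone \(1\)-Lipschitz map \(u\mapsto\max\{0,u\}\). Your unified claim for \(m\leq m'\leq 2m+1\) packages the paper's two computations into one. The lower bound \(b\leq m'\), which you state without comment, is immediate from \(N_j\setminus H\subseteq N_j\setminus(E\cap H)\).
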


\begin{proof}
The map \(u\mapsto\max\{0,u\}\) is increasing and \(1\)-Lipschitz.
If \(z\in E\) and \(k=|N_j\setminus E|\), then
\(|N_j\setminus(E\setminus\{z\})|=k+1\).
Thus the decrease in \(c_j\) is at most
\[
\left(1-\frac1j\log_2(1+k)\right)
-\left(1-\frac1j\log_2(2+k)\right)
=\frac1j\log_2\frac{k+2}{k+1}
\leq\frac1j.
\]
If \(z\notin E\), there is no change.

For intersection, put
\(k=\max\{|N_j\setminus E|,|N_j\setminus H|\}\)
and \(m=|N_j\setminus(E\cap H)|\).
Then \(k\leq m\leq2k\).
The smaller of \(c_j(E)\) and \(c_j(H)\)
corresponds to \(k\), so the decrease is at most
\[
\left(1-\frac1j\log_2(1+k)\right)
-\left(1-\frac1j\log_2(1+m)\right)
=\frac1j\log_2\frac{1+m}{1+k}
\leq\frac1j\log_2\frac{1+2k}{1+k}
\leq\frac1j.
\]
Monotonicity gives the lower bounds. The truncation at zero preserves
all these estimates.
\end{proof}

Since the conditions defining \(L\) are imposed coordinatewise,
deleting one element from each of infinitely many coordinates
does not require adding up the decreases in the coordinate height bounds.
Indeed, let \((t,A)\in L\) and \(M\geq2\). Suppose that
\(B_j=A_j\) for \(j<M\), while \(B_j\) is obtained from \(A_j\)
by deleting at most one element for each \(j\geq M\).
Then
\[
c_j(B_j)\geq
\begin{cases}
t, & j<M,\\[2mm]
t-\dfrac1j\geq t-\dfrac1M, & j\geq M.
\end{cases}
\]
Consequently,
\[
\left(\max\{0,t-1/M\},B\right)\in L.
\]
Thus, by choosing \(M\) sufficiently large, we can delete
elements from infinitely many coordinates while decreasing
the height by an arbitrarily small amount.

\subsection{The lattice and its compact Hausdorff topology}

All operations below are taken in the coordinate order of \(L\), and
continuity refers to the auxiliary topology \(\tau\) until the Scott topology
is identified.

\Needspace{14\baselineskip}
\begin{lemma}\label{lem:lattice}
The poset \(L\) is a complete lattice. For a nonempty family
\(\{(t_\alpha,A^\alpha):\alpha\in I\}\subseteq L\),
\begin{equation}\label{eq:join}
  \bigvee_{\alpha\in I}(t_\alpha,A^\alpha)
  =\left(\sup_{\alpha\in I}t_\alpha,
                 \left(\bigcup_{\alpha\in I}A^\alpha_j\right)_j\right).
\end{equation}
Writing \(B^\wedge_j=\bigcap_{\alpha\in I}A^\alpha_j\), its meet is
\begin{equation}\label{eq:meet}
  \bigwedge_{\alpha\in I}(t_\alpha,A^\alpha)
   =\left(\min\left\{\inf_{\alpha\in I}t_\alpha,
                 \inf_{j\geq2}c_j(B^\wedge_j)\right\},B^\wedge\right),
\end{equation}
where \(B^\wedge=(B^\wedge_j)_{j\geq2}\).
The least and greatest elements are the points \(\bot_L\) and \(\top_L\)
introduced in Subsection~\ref{subsec:ambient-strategy}.
Moreover, \((L,\tau)\) is compact metrizable Hausdorff, the order is closed,
and both binary lattice operations are jointly \(\tau\)-continuous.
\end{lemma}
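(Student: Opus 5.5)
We verify the join and meet formulas directly, then the topological claims.

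\emph{Joins.} Let \((s,B)\) be the right side of \eqref{eq:join}, with \(s=\sup t_\alpha\) and \(B_j=\bigcup_\alpha A^\alpha_j\). For each \(\alpha\) and \(j\) we have \(t_\alpha\leq c_j(A^\alpha_j)\leq c_j(B_j)\) by monotonicity of \(c_j\). Taking the supremum over \(\alpha\) gives \(s\leq c_j(B_j)\), so \((s,B)\in L\). It is clearly an upper bound in the coordinate order. It is also the least one, because any upper bound \((u,C)\in L\) satisfies \(u\geq s\) and \(C_j\supseteq B_j\).

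\emph{Meets.} Let \(m=\min\{\inf_\alpha t_\alpha,\inf_j c_j(B^\wedge_j)\}\). Then \(m\leq c_j(B^\wedge_j)\) for all \(j\), so \((m,B^\wedge)\in L\), and it lies below each \((t_\alpha,A^\alpha)\). If \((u,C)\in L\) lies below all of them, then \(C\preceq B^\wedge\) and \(u\leq\inf t_\alpha\). Moreover \(u\leq c_j(C_j)\leq c_j(B^\wedge_j)\) for every \(j\), so \(u\leq m\). Completeness follows, since nonempty joins exist and \(\bot_L\) handles the empty join. The extremal points are \((0,\varnothing)\), which lies in \(L\) trivially, and \((1,N)\), which lies in \(L\) since \(c_j(N_j)=1\).

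\emph{Topology.} We show \(L\) is closed in \(X\). Each condition \(t\leq c_j(A_j)\) depends only on \(t\) and the discrete coordinate \(A_j\). Hence it defines the closed set \(\bigcup_{E}\,[0,c_j(E)]\times\{A:A_j=E\}\), a finite union of closed sets. So \(L\) is an intersection of closed sets, and therefore compact metrizable Hausdorff. The order is closed because \(\{(x,y):t_x\leq t_y\}\) is closed, and each condition \(A^x_j\subseteq A^y_j\) is clopen, depending on finitely many discrete coordinates.

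\emph{Join continuity.} For the binary join, the height coordinate is \(\max\), which is continuous. Each set coordinate is \(A_j\cup B_j\), which depends only on the \(j\)-th coordinates. So the join is continuous into \(X\), with image in \(L\).

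\emph{Meet continuity (main obstacle).} The height coordinate of the meet is \(\min\{t_x,t_y,c(A^x\wedge A^y)\}\), and the infimum over infinitely many \(j\) need not be continuous a priori. The set coordinates are again continuous. So the issue is to show \((x,y)\mapsto\min\{t_x,t_y,\inf_j c_j(A^x_j\cap A^y_j)\}\) is continuous on \(L\times L\); we use Lemma~\ref{lem:loss}. Since \(t_x\leq c_j(A^x_j)\) and \(t_y\leq c_j(A^y_j)\), \eqref{eq:intersection-loss} gives \(c_j(A^x_j\cap A^y_j)\geq\min\{t_x,t_y\}-1/j\) for every \(j\). Fix \(\varepsilon>0\) and choose \(M>1/\varepsilon\). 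Then
\[
 \min\{t_x,t_y\}-\varepsilon\leq h(x,y)\leq \min\Bigl\{t_x,t_y,\min_{j<M}c_j(A^x_j\cap A^y_j)\Bigr\}=:h_M(x,y),
\]
where \(h(x,y)\) denotes the height of \(x\wedge y\). We claim \(|h-h_M|\leq\varepsilon\). If \(h_M\) is attained at \(t_x\) or \(t_y\), this follows from the lower bound above. Otherwise it is attained at some \(j<M\), in which case \(h\) equals \(h_M\) or some \(c_j\) with \(j\geq M\), and every such \(c_j\) is at least \(\min\{t_x,t_y\}-\varepsilon\geq h_M-\varepsilon\). Each \(h_M\) is continuous, since it depends on the two real coordinates and finitely many discrete ones. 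Hence \(h\) is a uniform limit of continuous functions and is therefore continuous.
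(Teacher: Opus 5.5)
Your proposal is correct in substance and follows the paper's proof step for step. It uses the same direct verification of the join and meet formulas, the same closedness of $L$ and of the order in $X$, and the same continuity argument for the meet height as a uniform limit of finite-prefix minima via Lemma~\ref{lem:loss}.

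There is one slip. The displayed bound $\min\{t_x,t_y\}-\varepsilon\le h(x,y)$ is false in general, because Lemma~\ref{lem:loss} bounds the loss at a small index $j$ only by $1/j$, not by $\varepsilon$. For example, take $x,y$ of height $\tfrac12$ with second coordinates $N_2\setminus\{1\}$ and $N_2\setminus\{2\}$ and all other coordinates full. Then $h(x,y)=1-\tfrac12\log_2 3<\tfrac14=\min\{t_x,t_y\}-\tfrac14$, so the bound fails for $\varepsilon=\tfrac14$.

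The correct estimate is $\inf_{j\geq M}c_j(A^x_j\cap A^y_j)\geq\min\{t_x,t_y\}-1/M\geq h_M(x,y)-\varepsilon$. This gives $h\geq h_M-\varepsilon$ at once, exactly as in the paper, and makes your case split unnecessary.
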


\begin{proof}
Let \(\{(t_\alpha,A^\alpha)\}_{\alpha\in I}\) be a nonempty family
in \(L\), and put
\[
 B^\vee_j=\bigcup_{\alpha\in I}A_j^\alpha,
 \qquad t_\vee=\sup_{\alpha\in I}t_\alpha.
\]
Write \(B^\vee=(B^\vee_j)_{j\geq2}\).
For every \(j\geq2\) and \(\alpha\in I\), monotonicity gives
\(c_j(B^\vee_j)\geq c_j(A_j^\alpha)\geq t_\alpha\).
Hence \(c_j(B^\vee_j)\geq t_\vee\), so
\((t_\vee,B^\vee)\in L\). This point is an upper bound of the
family. If \((s,C)\in L\) is any other upper bound, then
\(t_\alpha\leq s\) and \(A_j^\alpha\subseteq C_j\) for all
\(\alpha,j\). Thus \(t_\vee\leq s\) and
\(B^\vee_j\subseteq C_j\) for every \(j\), proving
\eqref{eq:join}.

For the meet, put
\[
 B^\wedge_j=\bigcap_{\alpha\in I}A_j^\alpha,
 \qquad
 t_\wedge=\min\left\{\inf_{\alpha\in I}t_\alpha,
                       \inf_{j\geq2}c_j(B^\wedge_j)\right\}.
\]
Then \((t_\wedge,B^\wedge)\in L\), and it lies below every member
of the family. If \((s,C)\in L\) is another lower bound, then
\(s\leq t_\alpha\) and \(C_j\subseteq A_j^\alpha\) for all
\(\alpha,j\). Consequently \(C_j\subseteq B^\wedge_j\) and
\(s\leq c_j(C_j)\leq c_j(B^\wedge_j)\) for every \(j\), while
\(s\leq\inf_\alpha t_\alpha\). Hence \(s\leq t_\wedge\) and
\((s,C)\leq(t_\wedge,B^\wedge)\), proving \eqref{eq:meet}.
The points \(\bot_L\) and \(\top_L\) are respectively the empty
join and the empty meet, so \(L\) is a complete lattice.

We next show that \(L\) is closed in \(X\).
Let \(\pi_0:X\to[0,1]\) and \(\pi_j:X\to\mathcal P(N_j)\) be the
coordinate projections, \(\pi_0(t,A)=t\) and \(\pi_j(t,A)=A_j\).
They are continuous in the product topology. The map
\(c_j:\mathcal P(N_j)\to\mathbb R\) is continuous because its
domain is discrete. Hence \(f_j=\pi_0-c_j\circ\pi_j\) is a
continuous real-valued function on \(X\), and
\[
 \{(t,A)\in X:t\leq c_j(A_j)\}=f_j^{-1}((-\infty,0])
\]
is closed. Taking the intersection over all \(j\geq2\) proves that
\(L\) is closed in \(X\). Since \(X\) is compact metrizable Hausdorff,
so is its closed subspace \((L,\tau)\).

The order relation on \(L\) is given by
\[
(s,A)\leq(t,B)
\iff
s\leq t
\quad\text{and}\quad
A_j\subseteq B_j\text{ for every }j\geq2.
\]
The relation
\[
R_0=\{(s,t)\in[0,1]^2:s\leq t\}
\]
is closed, since it is the inverse image of the closed set
\((-\infty,0]\) under the continuous map
\[
(s,t)\longmapsto s-t.
\]
For each \(j\geq2\), the relation
\[
R_j=
\{(A_j,B_j)\in\mathcal P(N_j)^2:A_j\subseteq B_j\}
\]
is closed because \(\mathcal P(N_j)^2\) is discrete.

Let \(\Pi_0:X\times X\to[0,1]^2\) and
\(\Pi_j:X\times X\to\mathcal P(N_j)^2\) be the continuous maps
\[
 \Pi_0((s,A),(t,B))=(s,t),\qquad
 \Pi_j((s,A),(t,B))=(A_j,B_j).
\]
Then the coordinate order on \(X\) has graph
\[
 G_X=\Pi_0^{-1}(R_0)\cap\bigcap_{j\geq2}\Pi_j^{-1}(R_j).
\]
Each set in this intersection is closed, since the coordinate projections are continuous. Hence \(G_X\) is closed in \(X\times X\).
Therefore
\[
G_L=G_X\cap(L\times L)
\]
is closed in \(L\times L\).

By \eqref{eq:join}, binary join is given by
\[
(s,A)\vee(t,B)
=
\left(\max\{s,t\},(A_j\cup B_j)_{j\geq2}\right).
\]
The maximum function is continuous on \([0,1]^2\), and the map
\((E,H)\mapsto E\cup H\) on \(\mathcal P(N_j)^2\) is continuous,
since that domain is discrete. Composing with the coordinate
projections shows that every output coordinate of join is continuous.
Thus join is continuous as a map \(L\times L\to X\); its image
lies in \(L\), so the subspace topology makes it jointly
\(\tau\)-continuous as a map into \(L\).

It remains to prove that binary meet is continuous. Define
\(m,m_n:L\times L\to[0,1]\), for \(n\geq2\), by
\[
 \begin{aligned}
 m(x,y)&=\height(x\wedge y),\\
 m_n(x,y)&=\min\{t_x,t_y,c_2(A_2^x\cap A_2^y),\ldots,
                                  c_n(A_n^x\cap A_n^y)\}.
 \end{aligned}
\]
Each \(m_n\) is continuous, since it is the minimum of finitely many
continuous functions. For \(j>n\), Lemma~\ref{lem:loss} gives
\[
 \begin{aligned}
 c_j(A_j^x\cap A_j^y)
 &\geq\min\{c_j(A_j^x),c_j(A_j^y)\}-\frac1j\\
 &\geq\min\{t_x,t_y\}-\frac1j
 \geq m_n(x,y)-\frac1{n+1}.
 \end{aligned}
\]
By \eqref{eq:meet},
\[
 m(x,y)=\min\left\{m_n(x,y),
                    \inf_{j>n}c_j(A_j^x\cap A_j^y)\right\}.
\]
Consequently
\[
 0\leq m_n(x,y)-m(x,y)\leq\frac1{n+1}
 \qquad(x,y\in L).
\]
Thus \(m_n\) converges uniformly to \(m\), and \(m\) is continuous
by the uniform limit theorem \cite[Theorem~21.6]{Munkres2000}.
For each \(j\), the \(j\)-th configuration coordinate of the meet is
\(A_j^x\cap A_j^y\). It is continuous because intersection is a map
between finite discrete spaces. Hence every coordinate of binary meet
is continuous, and its values lie in \(L\). The product and subspace
topologies therefore make binary meet jointly \(\tau\)-continuous.
\end{proof}

\subsection{The intrinsic Scott topology}

The following identification is an application of Lawson's theorem on
compact Hausdorff topological semilattices
\cite[Theorem~13, pp.~599--600]{Lawson1973}.
The term \emph{ideal} in that theorem means a lower subset; directedness
of that subset is not required.

\begin{lemma}\label{lem:scott-ambient}
The Scott topology of \(L\) consists exactly of the \(\tau\)-open upper sets.
For each \(x\in L\), the map \(y\mapsto x\wedge y\) preserves directed
suprema.
\end{lemma}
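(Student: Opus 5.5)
The plan is to apply Lawson's theorem on compact Hausdorff topological semilattices \cite[Theorem~13]{Lawson1973}. By Lemma~\ref{lem:lattice}, \((L,\tau)\) is a compact Hausdorff topological meet-semilattice: binary meet is jointly \(\tau\)-continuous and the order is closed. Lawson's theorem then says that every directed set \(E\) converges in \(\tau\) to \(\sup E\). It also says that the upper sets which are \(\tau\)-open form the Scott topology, provided the semilattice has small semilattices at each point. Since I am not certain the cited form packages both conclusions, I will prove the two inclusions directly, using Lawson's result mainly as a guide.

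\emph{Directed convergence.} Let \(E\subseteq L\) be directed with \(s=\sup E\). Then \(s\) is given by \eqref{eq:join}: its height is \(\sup t_e\) and its coordinates are \(\bigcup_e A^e_j\). In each coordinate \(j\), the finite directed family \(\{A^e_j\}\) is eventually equal to \(\bigcup_e A^e_j\). For finitely many \(j\) we can choose a single \(e_0\) realizing all of these unions, and the heights converge upward to \(\sup t_e\). Hence \(E\), viewed as a net, converges to \(s\) in \(\tau\). It follows that every \(\tau\)-open upper set \(V\) is Scott open: if \(\sup E\in V\), then some \(e\in V\).

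\emph{Scott open implies \(\tau\)-open.} Let \(U\) be Scott open and \(x=(t,A)\in U\). We need a \(\tau\)-neighbourhood of \(x\) contained in \(U\). For \(M\geq2\), put \(x_M=(\max\{0,t-1/M\},A^{(M)})\), where \(A^{(M)}_j=A_j\) for \(j<M\) and \(A^{(M)}_j=\varnothing\) for \(j\geq M\)? This fails: emptying a coordinate can drop \(c_j\) to \(0\). Instead, I use the lower points \(y_M=\bigwedge U(x;I_M,J_M)\), where \(I_M=(t-1/M,1]\) and \(J_M=\{2,\dots,M\}\). By \eqref{eq:meet}, \(y_M\) has coordinates \(A_j\) for \(j\leq M\). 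For \(j>M\), its coordinates are intersections of sets with at most \(q_r(j)\) omissions, and its height is determined by \eqref{eq:meet}. The family \(\{y_M\}\) is increasing, and I must check that \(\sup_M y_M=x\). The coordinates are fine: for each \(j\), once \(M\geq j\) the coordinate equals \(A_j\). The height requires that \(c_j\) of the \(j\)-th coordinate of \(y_M\) tends to \(t\) uniformly in \(j>M\). Once that holds, Scott openness yields some \(y_M\in U\), and then \(U(x;I_M,J_M)\subseteq\uparrow y_M\subseteq U\).

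\emph{Main obstacle.} The hard step is this height estimate. The intersection over an entire neighbourhood of \(j\)-th coordinates with at most \(q_{t-1/M}(j)\) omissions is generally empty when \(j\) is large, so \(y_M\) has height \(0\). I will therefore replace the full meet by a smarter argument, which is the content of Lawson's small-semilattice condition. Given \(\tau\)-basic neighbourhoods \(W\) of \(x\), I will show that \(x\) lies in the \(\tau\)-interior of \(\uparrow F\) for some \(F\subseteq U\). Concretely, if \(U\) were not a \(\tau\)-neighbourhood of \(x\), pick a net \(z_\alpha\to x\) with \(z_\alpha\notin U\). The meets \(x\wedge z_\alpha\) converge to \(x\) by continuity of meet. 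The key step is to extract from them an increasing sequence with supremum \(x\), using compactness and the finite-coordinate structure: take \(w_n=\bigwedge_{k\geq n}(x\wedge z_{\alpha_k})\) along a sequence (by metrizability). The infinite meet's height is controlled by Lemma~\ref{lem:loss}, but only across finitely many intersections, and this is the delicate point. I would try to select the sequence so that the \(k\)-th term agrees with \(x\) on coordinates \(\leq k\) and has height \(>t-2^{-k}\). Then the intersections at coordinate \(j\) involve only terms with \(k<j\), and these can be bounded via \eqref{eq:complement-bound} together with the large size \(d_j\). Then \(w_n\uparrow x\), so some \(w_n\in U\), and hence \(z_{\alpha_k}\in U\), a contradiction.

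\emph{Directed suprema of meets.} Finally, for fixed \(x\), the map \(y\mapsto x\wedge y\) preserves directed suprema. It is monotone, so \(\sup_e (x\wedge e)\leq x\wedge\sup E\). For the reverse inequality, the net \(x\wedge e\) is increasing and \(\tau\)-converges to \(x\wedge\sup E\) by \(\tau\)-continuity of meet and directed convergence. An increasing net converges to its supremum, and since \(\tau\) is Hausdorff the two limits coincide, giving equality.
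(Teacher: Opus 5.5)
Your proof is correct in substance. For the first assertion it takes a different route from the paper.

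\textbf{The two routes.} The paper proves nothing about \(\sigma(L)\) by hand. It quotes Lawson's theorem that, in every compact Hausdorff topological semilattice, a lower set is closed exactly when it contains the suprema of its monotone nets. This identifies Scott-closed sets with \(\tau\)-closed lower sets at once. Its proof of the second assertion is the same as yours, except that it cites the monotone-net lemma instead of checking convergence from \eqref{eq:join}. You instead prove \(\sigma(L)\subseteq\tau\) by a diagonal selection of points \(y_k\le x\) whose tail meets \(w_n\) increase to \(x\).

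The paper's route is two lines and holds for all compact semilattices. Yours is self-contained and shows why no small-semilattice hypothesis is needed. The proviso you attach to Lawson's theorem is not part of the statement the paper uses. \(L\) would not satisfy it anyway: \(L\) is not a continuous lattice, since a positive-height point \((t,A)\) is the supremum of points omitting \(f(j)\) in all late coordinates for any choice \(f(j)\in A_j\). Hence only height-zero points are way below it.

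\textbf{The estimate you left as a sketch.} The height estimate should be written out; it is short. For \(j>n\), the \(j\)-th coordinate of \(w_n\) is an intersection of fewer than \(j\) sets \(B^k_j\), each with \(c_j(B^k_j)\ge r:=t-2^{-n}\). If \(r>0\), \eqref{eq:complement-bound} bounds each complement by \(2^{(1-r)j}-1\). The union of the complements therefore has at most \(j2^{(1-r)j}-1\) elements, so the intersection has \(c_j\)-value at least \(r-\log_2 j/j\). Alternatively, iterating \eqref{eq:intersection-loss} pairwise gives a loss of at most \(\lceil\log_2 j\rceil/j\). Thus
\(\height(w_n)\ge t-2^{-n}-\sup_{j>n}\log_2 j/j\to t\), and \(\sup_n w_n=x\). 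The size of \(d_j\) plays no role here; what matters is the logarithm in \(c_j\).

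\textbf{A coordinate-free version.} Your selection idea works without coordinates. Fix a compatible metric \(d\) and use uniform continuity of meet. Choose each \(y_k\) within \(2^{-k}\) of \(x\), and close enough that \(d(a\wedge y_k,a)<2^{-k}\) for all \(a\le x\). The partial meets then telescope, giving \(d(\inf_{k\ge n}y_k,x)\le 2^{1-n}\).

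In a final version, delete the two abandoned attempts.
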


\begin{proof}
By Lemma~\ref{lem:lattice}, \((L,\tau)\) is a compact Hausdorff
topological lattice and hence a topological meet-semilattice. Lawson's theorem states that a lower subset of such a
semilattice is closed if and only if it contains the suprema of its
nondecreasing nets whenever those suprema exist. A directed subset,
indexed by its own order, is a nondecreasing net; conversely, the range
of a nondecreasing net is directed. Hence a lower subset of \(L\) is
\(\tau\)-closed if and only if it is closed under directed suprema,
that is, Scott closed. Taking complements proves the first assertion.

For the second assertion, fix \(x\in L\) and a directed
\(E\subseteq L\). By Lawson's monotone-net lemma
\cite[Lemma~9, p.~598]{Lawson1973}, the net indexed by \((E,\leq)\)
converges in \(\tau\) to \(\sup_LE\). Continuity of
\(y\mapsto x\wedge y\) therefore gives convergence of its image net to
\(x\wedge\sup_LE\). The image net is also nondecreasing, so the same
lemma makes it converge to \(\sup_{e\in E}(x\wedge e)\).
Uniqueness of limits in the Hausdorff topology \(\tau\) yields
\begin{equation}\label{eq:ambient-meet-continuity}
 x\wedge\sup_LE=\sup_{e\in E}(x\wedge e).\qedhere
\end{equation}
\end{proof}

\begin{proposition}\label{prop:ambient-properties}
The Scott space of \(L\) is compact, second countable, locally compact,
core-compact and sober. Its Scott topology has the countable basis
\begin{equation}\label{eq:basis}
 \begin{aligned}
 &\{(t,A)\in L: E_j\subseteq A_j\text{ for all }j\in J \},\\
 &\{(t,A)\in L:t>r,\ E_j\subseteq A_j\text{ for all }j\in J \},
 \end{aligned}
\end{equation}
where \(J\subseteq\{2,3,\ldots\}\) is finite, \(E_j\subseteq N_j\), and
\(r\in\mathbb Q\cap[0,1)\). These conditions require elements to be present,
not equality of coordinate subsets.
\end{proposition}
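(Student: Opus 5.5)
The proof will rest entirely on Lemma~\ref{lem:scott-ambient}, which says the Scott-open sets of \(L\) are exactly the \(\tau\)-open upper sets. The first step is to check that the listed sets are Scott open. Each is upper in the coordinate order, since the conditions \(t>r\) and \(E_j\subseteq A_j\) are preserved upward. Each is also \(\tau\)-open, because it is the preimage under finitely many continuous projections of an open subset of \([0,1]\) or of a finite discrete space. So Lemma~\ref{lem:scott-ambient} makes them Scott open. There are countably many of them, since \(J\) is finite, each \(\mathcal P(N_j)\) is finite, and \(r\) is rational.

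The second step, which I expect to be the main obstacle, is to show these sets form a basis. Take a Scott-open \(V\) and a point \(x=(t,A)\in V\). I will find a basic set \(W\) with \(x\in W\subseteq V\). The complement \(F=L\setminus V\) is \(\tau\)-closed, hence \(\tau\)-compact, and it is a lower set. For each \(y\in F\) we have \(x\not\leq y\), so either \(t>t_y\) or some \(a\in A_j\setminus A_j^y\) exists. Correspondingly, \(y\) lies outside the basic set \(\{t>r\}\) for a rational \(r\) with \(t_y<r<t\), or outside \(\{a\in A_j\}\). These sets contain \(x\), but they are upper rather than separating, so I need a compactness argument for the lower set \(F\).

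For that, I will use the \(\tau\)-open sets \(O_y=\{z: t_z<r\}\) or \(O_y=\{z: a\notin A^z_j\}\), which are open lower sets containing \(y\). Finitely many of them cover \(F\). Intersecting the complementary upper conditions gives a basic set \(W\) of the second form, or of the first form if no height condition arises. Then \(W\) contains \(x\) and is disjoint from each chosen \(O_y\), hence disjoint from \(F\), so \(W\subseteq V\). One detail needs care: if \(t=0\) no height condition can occur, which is why the first family is included. Several height conditions combine by taking the maximum \(r\).

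With the basis in hand, the remaining properties follow quickly. Compactness holds because every open cover contains a set containing \(\bot_L\), and that upper set is all of \(L\). For local compactness, I will use that \(L\) is a complete lattice whose Scott topology coincides with the upper topology determined by the compact Lawson-type topology \(\tau\), in which the order is closed (Lemma~\ref{lem:lattice}). So for \(x\in W\) with \(W\) basic, the set \(\uparrow x\) is \(\tau\)-compact, and I aim to find a \(\tau\)-closed upper set \(K\) with \(x\in\operatorname{int}K\subseteq K\subseteq W\). A natural choice is \(K=\{z: t_z\geq r', E_j\subseteq A^z_j\}\) with \(r<r'<t\); its interior contains the basic set with \(t>r'\), which contains \(x\). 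For \(t=0\), the first-form basic set is already \(\tau\)-closed. Any \(\tau\)-compact set is Scott compact because Scott open sets are \(\tau\)-open. Core-compactness then follows from local compactness. For sobriety, I will cite the standard fact that a compact pospace whose upper topology is the Scott topology is sober. Alternatively, I can argue directly: an irreducible Scott-closed set \(F\) is \(\tau\)-compact and lower, and irreducibility forces it to be directed, for instance by intersecting the separating upper sets above. Its supremum then lies in \(F\) and is the generic point.
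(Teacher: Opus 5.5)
Your proposal is correct. For the basis, compactness and local compactness it follows the paper's proof almost verbatim: cover the $\tau$-compact lower set $L\setminus V$ by finitely many $\tau$-open sets $\{\height<r\}$ or $\{z:a\notin A^z_j\}$, intersect the complementary upper conditions, and use $K=\{t\geq r',\ E_j\subseteq A_j\}$ for local compactness. Two small differences there are immaterial. The paper checks Scott-openness of the generators directly from the join formula rather than via Lemma~\ref{lem:scott-ambient}. Also, in your local-compactness step the case split should be on whether $W$ carries a height threshold, not on whether $t=0$; this is only bookkeeping.

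The genuine difference is sobriety. The paper first proves core-compactness and then applies the result of Jia, Jung and Li that a complete lattice with core-compact Scott space is sober. You instead note that $(L,\tau,\leq)$ is a compact pospace (Lemma~\ref{lem:lattice}) whose topology of $\tau$-open upper sets is the Scott topology (Lemma~\ref{lem:scott-ambient}). You then use that this upward topology of a compact pospace is always stably compact. Your route is more structural: it gives sobriety, compactness and local compactness at once, so the explicit construction of $K$ becomes optional. The paper's route uses only the lattice structure and the core-compactness it has just established.

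Your direct alternative also works once made precise. Suppose $a,b\in F$ but $a\vee b\notin F$. Take a basic neighbourhood $\{t>r,\ E_j\subseteq A_j\}$ of $a\vee b$ inside $L\setminus F$. It splits into basic neighbourhoods of $a$ and of $b$: distribute each $E_j$ between $A^a_j$ and $A^b_j$, and attach the height condition, if any, to whichever of $a,b$ has height $>r$. The intersection of these two neighbourhoods misses $F$, contradicting irreducibility. Hence $F$ is directed, and $\sup F\in F$ is its generic point.
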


\begin{proof}
For \(j\geq2\), \(z\in N_j\), and \(r\in\mathbb Q\cap[0,1)\), put
\[
 O_{j,z}=\{(t,A)\in L:z\in A_j\},\qquad
 H_r=\{(t,A)\in L:t>r\}.
\]
Both sets are upper. If a directed set \(E\subseteq L\) has supremum
in \(O_{j,z}\), formula~\eqref{eq:join} shows that \(z\) belongs to the
\(j\)-th coordinate of some member of \(E\). Thus \(E\cap O_{j,z}\ne\varnothing\).
If \(\sup_LE\in H_r\), then
\(\sup_{e\in E}\height(e)>r\), so some \(e\in E\) has
\(\height(e)>r\). Thus \(H_r\) is Scott open as well.
The sets in \eqref{eq:basis} are finite intersections of these opens.
There are countably many possible finite sets of coordinate requirements
and countably many rational thresholds, so this family is countable.

We prove that it is a basis. Let \(x\in U\in\sigma(L)\).
Lemma~\ref{lem:scott-ambient} makes \(U\) a \(\tau\)-open upper set,
so \(K_0=L\setminus U\) is \(\tau\)-compact. For each \(y\in K_0\),
one has \(x\nleq y\), because \(x\leq y\) would imply \(y\in U\).
If \(z\in A_j^x\setminus A_j^y\) for some \(j,z\), choose
\(O_y=O_{j,z}\) and \(W_y=L\setminus O_{j,z}\).
Then \(x\in O_y\), \(y\in W_y\), and \(W_y\) is \(\tau\)-open.
If no such coordinate exists, \eqref{eq:order} implies
\(\height(y)<\height(x)\). Choose a rational \(r\) strictly between
these heights and put
\[
 O_y=H_r,\qquad W_y=\{z\in L:\height(z)<r\}.
\]
Again \(x\in O_y\), \(y\in W_y\), and \(O_y\cap W_y=\varnothing\).
Compactness gives \(y_1,\ldots,y_m\) such that
\(K_0\subseteq\bigcup_{i=1}^mW_{y_i}\). The finite intersection
\(\bigcap_{i=1}^mO_{y_i}\) contains \(x\) and is disjoint from \(K_0\).
Combining the finitely many required entries and taking the largest
height threshold, if there is one, expresses this intersection in the
form \eqref{eq:basis}. If \(K_0=\varnothing\), use the basis member \(L\).
Thus every \(x\in U\) belongs to a basis member contained in \(U\).

For local compactness, choose such a basis member \(W\) with
\(x\in W\subseteq U\). If \(W\) has no height threshold, put \(V=K=W\).
The finite coordinate conditions make \(K\) \(\tau\)-closed and upper.
If \(W\) has height threshold \(r\), choose a rational \(r'\) with
\(r<r'<\height(x)\) and put
\[
 \begin{aligned}
 V&=\{(t,A)\in L:t>r',\ E_j\subseteq A_j\ (j\in J)\},\\
 K&=\{(t,A)\in L:t\geq r',\ E_j\subseteq A_j\ (j\in J)\}.
 \end{aligned}
\]
In both cases \(x\in V\subseteq K\subseteq U\), \(V\) is Scott open,
and \(K\) is \(\tau\)-closed and upper. Since \((L,\tau)\) is compact,
\(K\) is \(\tau\)-compact. Every Scott-open set is \(\tau\)-open by
Lemma~\ref{lem:scott-ambient}, so \(K\) is also a compact subset of \(\Sigma L\).
Its upperness makes it saturated in \(\Sigma L\). This proves local
compactness, hence also core-compactness, of \(\Sigma L\).

It is compact because an open cover has a member containing \(\bot_L\),
and that upper set equals \(L\).

Finally, since \(L\) is a complete lattice and its Scott space is core-compact, it is sober \cite[Corollary~3.3]{JiaJungLi2016}. 
\end{proof}

\section{The tail-selection lemma}\label{sec:selection}

Write
\[
 L_+=\{x\in L:\height(x)>0\}.
\]
The recursion needs only the following two consequences of the finite
slalom argument.  The first supplies a small family of reference points
which is locally cofinal from below.  The second allows one tail function
to avoid any previously selected family of smaller cardinality.

\begin{proposition}[Tail selection]\label{prop:tail-selection}
There are an uncountable cardinal \(\kappa\leq\mathfrak c\) and a set
\(\mathcal R\subseteq L_+\), with \(|\mathcal R|\leq\kappa\), such
that:
\begin{enumerate}
\item\label{sel:reference}
if \(p\in V\in\sigma(L)\) and \(\height(p)>0\), then some
\(b\in\mathcal R\cap V\) satisfies \(b\leq p\);
\item\label{sel:avoidance}
if \(H\subseteq L_+\) and \(|H|<\kappa\), then there is
\(f\in\prod_{j\geq2}N_j\) such that, for every
\(x=(t_x,A^x)\in H\),
\[
 \{j\geq2:f(j)\in A_j^x\}
\]
is infinite.
\end{enumerate}
\end{proposition}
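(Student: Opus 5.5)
The plan is to define \(\kappa\) as a slalom-type cardinal attached to the bounds \eqref{eq:complement-bound}, so that \ref{sel:avoidance} holds by definition. Then \(\mathcal R\) is built from a suitably small capturing family, and the doubly exponential growth of \(d_j\) is used to compare the two cardinals involved.

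\emph{The avoidance cardinal.} For \(x=(t,A)\in L_+\), put \(C^x_j=N_j\setminus A^x_j\). By \eqref{eq:complement-bound}, \(|C^x_j|\le q_t(j)<2^{j}\) for every \(j\). Call a sequence \(S=(S_j)_{j\ge2}\) with \(S_j\subseteq N_j\) and \(|S_j|<2^j\) a \emph{thin slalom}. Let \(\kappa\) be the least cardinality of a family \(\mathcal S\) of thin slaloms such that every \(f\in\prod_jN_j\) satisfies \(f(j)\in S_j\) for all but finitely many \(j\), for some \(S\in\mathcal S\).

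The singleton slaloms \(S_j=\{f(j)\}\) form such a family, so \(\kappa\le\mathfrak c\). The cardinal \(\kappa\) is uncountable by a diagonal argument. Given thin slaloms \(S^0,S^1,\ldots\), choose \(f(j)\in N_j\setminus\bigcup_{n\le j}S^n_j\); this is possible because \((j+1)2^j<d_j\). Then \(f\) escapes every \(S^n\) infinitely often.

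Now let \(H\subseteq L_+\) with \(|H|<\kappa\). The family \(\{C^x:x\in H\}\) is not capturing, so some \(f\) satisfies \(f(j)\notin C^x_j\), that is \(f(j)\in A^x_j\), for infinitely many \(j\), for every \(x\in H\). This is \ref{sel:avoidance}.

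\emph{The reference family.} For \ref{sel:reference} I need a family \(\mathcal T\) of size \(\le\kappa\) of ``set-valued slaloms'' \(U=(U_j)\). These must satisfy two requirements. First, for every rational \(s>0\) and every sequence \((C_j)\) with \(|C_j|\le q_s(j)\), some \(U\in\mathcal T\) satisfies \(C_j\subseteq U_j\) for all large \(j\). Second, each \(U\) must obey \(|U_j|\le q_{s'}(j)\) for all large \(j\), for every rational \(s'<s\) fixed in advance.

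Granting \(\mathcal T\), let \(\mathcal R\) consist of the points \(b=(r,B)\) with \(r\in\mathbb Q\cap(0,1)\), \(U\in\mathcal T\), and \(M\ge2\), together with finitely many coordinates \(B_j\subseteq N_j\) for \(j<M\), and \(B_j=N_j\setminus U_j\) for \(j\ge M\), whenever \((r,B)\in L\). There are countably many choices of \(r\), \(M\) and the finite part, so \(|\mathcal R|\le\kappa\).

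Given \(p=(t,A)\) with \(t>0\) and a basic open \(V\ni p\) from \eqref{eq:basis} with data \(J\), \((E_j)\), \(r_0<t\), proceed as follows. Choose rationals \(r_0<r<s<t\). Take \(U\in\mathcal T\) capturing \(C^p\) from some \(M>\max J\) onwards, with \(M\) also large enough that \(|U_j|\le q_r(j)\) for \(j\ge M\). Copy \(B_j=A_j\) for \(j<M\). Then \(B\preceq A\) and \(c_j(B_j)\ge r\) for all \(j\), so \(b=(r,B)\in L\), \(b\le p\), and \(b\in V\).

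\emph{Main obstacle.} The main difficulty is producing \(\mathcal T\) of size \(\le\kappa\). The cardinal \(\kappa\) is defined by capturing \emph{points}, while \(\mathcal T\) must capture \emph{sets} whose size is a positive power \(2^{(1-s)j}\), with almost no loss of exponent. My first attempt will be block coding, which is where \(d_j=2^{2^j}\) should enter. Split \(C_j\) into pieces of bounded size. Code each piece by a single element of a later coordinate \(N_{j'}\); this is possible since \(|[N_j]^{\le m}|\le d_j^{m}\le d_{j'}\) once \(2^{j'}\ge m2^j\). Use a pairwise disjoint assignment of blocks of later coordinates to pieces. A point-capturing slalom of width \(<2^{j'}\) then yields, for each piece, fewer than \(2^{j'}\) candidate sets. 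The blocks must be chosen so that the total union of candidates at coordinate \(j\) stays below \(2^{(1-s')j}\); this requires the number of pieces times \(2^{j'}\) times the piece size to be \(2^{(1-s)j+o(j)}\), which forces a careful choice of the block sizes. If this calibration fails, the fallback is to redefine \(\kappa\) directly as the set-capturing number above. In that case \ref{sel:reference} is immediate, and the work moves to proving \ref{sel:avoidance} by the same coding run in reverse: from \(H\) with \(|H|<\kappa\), a non-captured set-sequence is decoded into a single escaping \(f\).
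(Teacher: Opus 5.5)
\paragraph{Where the proposal stands.} Your definition of \(\kappa\) is the paper's localization number \(\lambda\), up to width \(<2^j\) versus \(\le 2^j\). The following parts are correct and match the paper:
\begin{itemize}
\item the bounds \(\aleph_1\le\kappa\le\mathfrak c\);
\item the derivation of \textup{(ii)};
\item the deduction of \textup{(i)} from a set-capturing family \(\mathcal T\), which mirrors Lemma~\ref{lem:approximating-family}.
\end{itemize}
But you never construct \(\mathcal T\) of size \(\le\kappa\), and that construction is the whole content of the proposition. It amounts to showing that set-valued capture with almost no loss of exponent costs no more than point capture at width \(2^j\). Your fallback does not escape this. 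Redefining \(\kappa\) as the set-capturing number just moves the same comparison into \textup{(ii)}, as you yourself note.

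\paragraph{Why the sketched coding fails.} The block coding you sketch cannot be calibrated. Encoding even one element of \(N_j\) injectively into a single coordinate \(N_{j'}\) forces \(d_{j'}\ge d_j\), so \(j'\ge j\). A capturing slalom of width about \(2^{j'}\) then bounds the union of decoded candidates only by something of order \(2^{j'}\ge 2^j\). That is already larger than the admissible \(q_{s'}(j)<2^{(1-s')j}\). Coding into later coordinates can only increase width, so no choice of piece sizes helps. What is missing is \emph{width reduction}: coding many coordinates into one coordinate of much smaller index. This is only possible after the alphabets have first been shrunk.

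\paragraph{What the paper does instead.} Lemma~\ref{lem:alphabet-pullback} uses one capturing family \(\mathcal F\) twice.
\begin{enumerate}
\item Do not split \(C_j\) into pieces. Treat the whole set \(C_j\) as a single letter of \(Y_j=\{R\subseteq N_j:|R|\le q_k(j)\}\); a count gives \(|Y_j|\le d_{2j}\).
\item Pull \(\mathcal F\) back along injections \(Y_j\to N_{2j}\). For each \(\varphi\in\mathcal F\) this gives candidate sets \(T_\varphi(j)\subseteq Y_j\) with \(|T_\varphi(j)|\le 2^{2j}\), eventually containing the target.
\item On each dyadic block \(I_n=\{2^n,\dots,2^{n+1}-1\}\), the product \(\prod_{j\in I_n}T_\varphi(j)\) has fewer than \(d_{2n+2}\) elements. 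Code the tuple from this product into the single coordinate \(N_{2n+2}\), and capture it with a second member \(\chi\in\mathcal F\).
\item Projecting back gives width at most \(2^{2n+2}\). This is polynomial in \(j\in I_n\), hence at most \(2^{\varepsilon j}\) eventually. The family uses only \(\lambda^2=\lambda\) slaloms.
\item Take \(\varepsilon=(k-h)/2\). Unions of at most \(2^{\varepsilon j}\) sets of size \(\le q_k(j)\) then have size \(\le q_h(j)\) for large \(j\). This is exactly the \(\mathcal T\) your argument needs.
\end{enumerate}
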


The proof is given in Appendix~\ref{app:selection}.  No other property
of \(\kappa\) or \(\mathcal R\) will be used.

\begin{remark}\label{rem:CH}
Under the continuum hypothesis, the cardinal in
Proposition~\ref{prop:tail-selection} is \(\aleph_1\).  At every stage
of the recursion below, only countably many points have then been
selected, and condition \ref{sel:avoidance} follows from the ordinary
countable diagonal argument.  The slalom proof in the appendix is the
ZFC replacement for this simpler picture.
\end{remark}

\section{The recursively selected sub-dcpo}\label{sec:recursive}

Fix \(\kappa\) and \(\mathcal R\) as in
Proposition~\ref{prop:tail-selection}, and regard \(\kappa\) as its
initial ordinal.  Put
\[
 Z=\{0\}\times\prod_{j\geq2}\mathcal P(N_j),
\]
the zero-height layer of \(L\).  The following insertion lemma is the
only place where condition \ref{sel:avoidance} is used in the main
proof.

\begin{lemma}\label{lem:insertion}
Let \(H\subseteq L_+\) have cardinality less than \(\kappa\).
\begin{enumerate}
\item\label{ins:point}
If \(b\in V\in\sigma(L)\) has positive height, there is
\(s\in V\cap\downarrow_Lb\) with \(0<\height(s)<1\) such that
\(x\nleq s\) for every \(x\in H\).
\item\label{ins:sequence}
Let \(p=(t,A)\in H\), where \(0<t<1\), and let
\(b=(v,C)\in L\) satisfy \(v\geq t\).  There is a strictly increasing
sequence \((r_n)_{n\geq1}\) of positive-height points such that
\[
 r_n\uparrow p
 \quad\text{and}\quad
 x\nleq r_n\quad(x\in H,n\geq1).
\]
Moreover, if \(E\subseteq L\) is directed,
\(p\leq\sup_LE\), and \(b\leq e_0\in E\), then every \(r_n\) lies
below some member of \(E\).
\end{enumerate}
\end{lemma}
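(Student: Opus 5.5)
The whole lemma rests on one device. Apply Proposition~\ref{prop:tail-selection}\ref{sel:avoidance} to \(H\) to obtain \(f\in\prod_{j\geq2}N_j\) such that, for every \(x\in H\), the set \(\{j:f(j)\in A^x_j\}\) is infinite. Now take any point whose \(j\)-th coordinate omits \(f(j)\) for all \(j\geq M\). Every \(x\in H\) has some \(j\geq M\) with \(f(j)\in A^x_j\), so \(x\) is not below that point. Both parts are therefore built by deleting \(f(j)\) from all coordinates beyond a cutoff. The remark following Lemma~\ref{lem:loss} shows that this costs only a height decrease of \(1/M\).

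For \ref{ins:point}, write \(b=(v,C)\) with \(v>0\). By Proposition~\ref{prop:ambient-properties}, some basis member \(W\) satisfies \(b\in W\subseteq V\). It has finitely many requirements \(E_j\subseteq C_j\) for \(j\in J\), and possibly a threshold \(r<v\); if there is no threshold, put \(r=0\). Choose \(M>\max J\) so large that \(s'=\min\{v,\tfrac12\}-1/M\) satisfies \(s'>r\) and \(s'>0\); this is possible because \(r<v\) and \(r<\tfrac12\) or can be handled by \(\min\{v,1-\epsilon\}\) with \(\epsilon\) small. Define \(B_j=C_j\) for \(j<M\) and \(B_j=C_j\setminus\{f(j)\}\) for \(j\geq M\), and put \(s=(s',B)\). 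By the remark, \(s\in L\). We have \(s\leq b\) and \(0<s'<1\). Since \(M>\max J\), the requirements of \(W\) are untouched, so \(s\in W\subseteq V\). The device then gives \(x\nleq s\) for all \(x\in H\).

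For \ref{ins:sequence}, the key choice is to let the sequence depend on \(b=(v,C)\) as well as on \(p=(t,A)\). Fix a strictly increasing sequence of integers \(M_n\) with \(2/M_1<t\). Put \(t_n=t-2/M_n\), and let \(r_n=(t_n,A^{(n)})\), where
\[
A^{(n)}_j=A_j \text{ for } j<M_n,\qquad A^{(n)}_j=(A_j\cap C_j)\setminus\{f(j)\} \text{ for } j\geq M_n.
\]
We check the required properties in turn.
\begin{itemize}
\item Membership in \(L\): Lemma~\ref{lem:loss} together with \(v\geq t\) gives \(c_j(A_j\cap C_j)\geq t-1/j\), and deletion costs another \(1/j\), so \(c_j(A^{(n)}_j)\geq t_n\) for \(j\geq M_n\).
\item Monotonicity: \(A^{(n)}\preceq A^{(n+1)}\preceq A\), because the modified region shrinks and the modified coordinates are subsets of \(A_j\). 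The heights \(t_n\) increase strictly to \(t<1\), so the sequence is strictly increasing with positive heights and every \(r_n\leq p\).
\item Supremum: by \eqref{eq:join}, \(\sup_n r_n\) has height \(t\) and \(j\)-th coordinate \(A_j\), since \(j<M_n\) eventually. Hence \(r_n\uparrow p\).
\item Avoidance: the device gives \(x\nleq r_n\) for every \(x\in H\).
\end{itemize}

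The "moreover" clause is where I expect the real obstacle. It requires dominating infinitely many coordinates of \(r_n\) by a single member of \(E\), and this is exactly why the \(C_j\) factor is built in. The finitely many coordinates \(j<M_n\) are finite sets \(A_j\subseteq\bigcup_{e\in E}A^e_j\), so directedness gives one \(e_1\in E\) containing them all. The coordinates \(j\geq M_n\) are subsets of \(C_j\subseteq A^{e_0}_j\). For the height, \(t_n<t\leq\sup_{e\in E}\height(e)\), so some \(e_2\in E\) has height greater than \(t_n\). A common upper bound of \(e_0,e_1,e_2\) in \(E\) then lies above \(r_n\).
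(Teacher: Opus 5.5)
Your proof is correct and follows the paper's argument. You use the same tail function \(f\) from Proposition~\ref{prop:tail-selection}\ref{sel:avoidance}, the same cutoff deletion for (i), and the same sequence with tail \((A_j\cap C_j)\setminus\{f(j)\}\) for (ii). The differences are cosmetic: a Scott basis member instead of a basic \(\tau\)-neighbourhood, heights \(t-2/M_n\) instead of \(t(1-2^{-n})\), and an extra \(e_2\) where \(\height(e_0)\geq v\geq t\) already suffices. The one blemish is the detour through \(\min\{v,\tfrac12\}\) in (i), which fails when the threshold satisfies \(r\geq\tfrac12\); the plain choice \(s'=v-1/M\) already gives \(\max\{r,0\}<s'<1\) for large \(M\).
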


\begin{proof}
Choose \(f\in\prod_{j\geq2}N_j\) as in
Proposition~\ref{prop:tail-selection}\ref{sel:avoidance}.  Thus, for
every \(x=(t_x,A^x)\in H\),
\begin{equation}\label{eq:tail-hit}
 f(j)\in A_j^x\quad\text{for infinitely many }j.
\end{equation}
Any configuration omitting \(f(j)\) in all sufficiently large
coordinates therefore fails to contain the configuration of every
member of \(H\).

For \ref{ins:point}, write \(b=(t,A)\).  By
Lemma~\ref{lem:scott-ambient}, \(V\) is \(\tau\)-open.  Choose a basic
\(\tau\)-neighbourhood of \(b\), contained in \(V\), which fixes only
a finite set \(J\) of configuration coordinates and restricts the
height to an interval \(I\ni t\).  Choose \(u\in I\cap(0,t)\), and
then choose \(M\) larger than every element of \(J\), with
\(1/M<t-u\).  Set
\[
 B_j=\begin{cases}
 A_j,&j<M,\\
 A_j\setminus\{f(j)\},&j\geq M.
 \end{cases}
\]
For \(j<M\), one has \(c_j(B_j)\geq t>u\).  For \(j\geq M\),
Lemma~\ref{lem:loss} gives
\(c_j(B_j)\geq t-1/j\geq t-1/M>u\).  Hence \(s=(u,B)\in L\),
with \(s\leq b\), and the chosen prefix and height interval give
\(s\in V\).  Equation~\eqref{eq:tail-hit} implies
\(x\nleq s\) for every \(x\in H\).

For \ref{ins:sequence}, put \(u_n=t(1-2^{-n})\).  Choose strictly
increasing integers \(M_n\) tending to infinity and satisfying
\(2/M_n<t-u_n\).  Define
\begin{equation}\label{eq:recursive-row}
 r_n=(u_n,R^n),\qquad
 R_j^n=\begin{cases}
 A_j,&j<M_n,\\
 (A_j\cap C_j)\setminus\{f(j)\},&j\geq M_n.
 \end{cases}
\end{equation}
For \(j\geq M_n\), Lemma~\ref{lem:loss} gives
\[
 c_j(R_j^n)
 \geq\min\{c_j(A_j),c_j(C_j)\}-\frac2j
 \geq t-\frac2{M_n}>u_n.
\]
The earlier coordinates have capacity at least \(t\), so \(r_n\in L\)
and \(r_n\leq p\).  When the cutoff moves from \(M_n\) to
\(M_{n+1}\), coordinates can only increase, and the heights increase
strictly.  Hence \(r_n<r_{n+1}\).  Each fixed coordinate eventually
equals \(A_j\), while \(u_n\uparrow t\); formula~\eqref{eq:join}
therefore gives \(\sup_nr_n=p\).  Equation~\eqref{eq:tail-hit} again
implies \(x\nleq r_n\) for every \(x\in H\).

Assume now that \(p\leq\sup_LE\) and \(b\leq e_0\in E\).  For a fixed
\(n\), only finitely many elements occur in the coordinates
\(A_j\), \(2\leq j<M_n\).  Each such element occurs in the
corresponding coordinate of some member of \(E\), because
\(p\leq\sup_LE\).  Directedness gives a single \(e\in E\) above
those finitely many members and above \(e_0\).  The finite prefix of
\(r_n\) is contained in that of \(e\); its tail is contained in
\(C\), hence in the configuration of \(e_0\leq e\); and
\(\height(e)\geq v\geq t>u_n\).  Therefore \(r_n\leq e\).
\end{proof}

We now arrange all future insertions by a single bookkeeping sequence.
Let \(\mathcal B\) be the countable Scott basis in
Proposition~\ref{prop:ambient-properties}, and set
\(\mathcal R^*=\mathcal R\cup\{\top_L\}\).  Consider the requirements
\[
\begin{aligned}
 \mathcal I_0&=\{(0,b,V):V\in\mathcal B,\ b\in\mathcal R\cap V\},\\
 \mathcal I_1&=\{(1,\delta,b):\delta\in\kappa\times\NN,
                                      \ b\in\mathcal R^*\}.
\end{aligned}
\]
Their union has cardinality at most \(\kappa\).  Fix a sequence
\((R_\alpha)_{\alpha<\kappa}\) of requirements such that every
requirement occurs cofinally often.  Such a sequence exists because
\(\kappa\times(\mathcal I_0\cup\mathcal I_1)\) has cardinality
\(\kappa\); project a bijection from \(\kappa\) onto this product to
the second coordinate.

At stage \(\alpha<\kappa\), a set \(T_\alpha\) will be either empty, a
singleton, or the range of one strictly increasing sequence.  Write
\[
 T_\alpha=\{p_{\alpha,n}:n\in J_\alpha\},\qquad
 J_\alpha\in\{\varnothing,\{0\},\NN\}.
\]
If a sequence \((r_n)_{n\geq1}\) is inserted, set
\(p_{\alpha,n}=r_{n+1}\).  For \(\delta=(\beta,n)\) with
\(n\in J_\beta\), write \(p_\delta=p_{\beta,n}\).

Suppose that the stages below \(\alpha\) have been constructed, and put
\[
 H_\alpha=\bigcup_{\beta<\alpha}T_\beta.
\]
Since each \(T_\beta\) is countable,
\begin{equation}\label{eq:stage-size}
 |H_\alpha|\leq\max\{|\alpha|,\aleph_0\}<\kappa.
\end{equation}
If \(R_\alpha=(0,b,V)\in\mathcal I_0\), apply
Lemma~\ref{lem:insertion}\ref{ins:point} to \(H_\alpha\), and let
\(T_\alpha\) be the resulting singleton.  If
\(R_\alpha=(1,\delta,b)\in\mathcal I_1\), and if \(\delta\) indexes a
point \(p_\delta\) selected at an earlier stage with
\(\height(p_\delta)\leq\height(b)\), apply
Lemma~\ref{lem:insertion}\ref{ins:sequence} with
\(p=p_\delta\), and let \(T_\alpha\) be the range of the resulting
sequence.  In all other cases put \(T_\alpha=\varnothing\).
Equation~\eqref{eq:stage-size} justifies every application of the
insertion lemma, including at limit stages.

Define
\begin{equation}\label{eq:D-definition}
 D=Z\cup\bigcup_{\alpha<\kappa}T_\alpha.
\end{equation}
All selected points have height strictly between \(0\) and \(1\), so
\(\top_L\notin D\).  Moreover, the insertion lemma gives the fundamental
stage property
\begin{equation}\label{eq:stage-order}
 \alpha_x<\alpha_y\quad\Longrightarrow\quad x\nleq y
 \qquad(x,y\in D\setminus Z),
\end{equation}
where \(\alpha_x\) denotes the unique stage at which \(x\) was
selected.  Uniqueness follows because a newly inserted point cannot
equal an old one.  Accordingly, every \(x\in D\setminus Z\) has a
unique index \(\delta_x=(\alpha_x,n_x)\) with
\(x=p_{\delta_x}\).

\begin{proposition}\label{prop:D-subdcpo}
The set \(D\) is a sub-dcpo of \(L\).
\end{proposition}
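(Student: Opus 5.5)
We must show: for every directed \(E\subseteq D\), \(\sup_LE\in D\). Then \(\sup_LE\) is the supremum of \(E\) in \(D\) too, since \(D\subseteq L\) inherits the order. The plan is to use the stage property \eqref{eq:stage-order} to show that a directed subset of \(D\) is, up to cofinality, either contained in \(Z\), or eventually constant, or cofinal in a single inserted sequence.

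\emph{Step 1 (the zero layer).} If \(E\subseteq Z\), formula~\eqref{eq:join} gives \(\sup_LE\) height \(0\). Every height-\(0\) point lies in \(L\) and in \(Z\), so \(\sup_LE\in Z\subseteq D\). Otherwise \(E\) contains a point of positive height. The points of \(E\) above such a point form a cofinal subset with the same supremum, and all of them have positive height. Indeed, \(x\leq y\) forces \(\height(x)\leq\height(y)\), and \(Z\) is exactly the height-zero part of \(D\). So we may assume \(E\subseteq D\setminus Z\).

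\emph{Step 2 (a single stage).} Take \(x,y\in E\) and an upper bound \(z\in E\) of both. By \eqref{eq:stage-order}, \(x\leq z\) forces \(\alpha_z\leq\alpha_x\), and likewise \(\alpha_z\leq\alpha_y\). Let \(\alpha_0=\min\{\alpha_x:x\in E\}\), which exists by well-ordering. For any \(x\in E\), pick \(y\in E\) with \(\alpha_y=\alpha_0\) and an upper bound \(z\in E\) of \(x\) and \(y\). Then \(\alpha_z\leq\alpha_0\), so \(\alpha_z=\alpha_0\). Hence \(E\cap T_{\alpha_0}\) is cofinal in \(E\), and \(\sup_LE=\sup_L(E\cap T_{\alpha_0})\).

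\emph{Step 3 (inside one stage).} \(T_{\alpha_0}\) is either a singleton or the range of a strictly increasing sequence \((p_{\alpha_0,n})_n\). That sequence is \((r_{n+1})\) from Lemma~\ref{lem:insertion}\ref{ins:sequence}, and its supremum is \(p_\delta\), a point selected at an earlier stage. In the singleton case the supremum is that point, which is in \(D\). In the sequence case, a directed subset of a chain is either bounded, and then has a maximum in \(D\), or cofinal, and then has supremum \(p_\delta\in D\). In either case \(\sup_LE\in D\).

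The main care is in Step 2. One must check that \(\alpha_x\) is well defined, which is the uniqueness of stages noted after \eqref{eq:stage-order}. One must also make sure that \eqref{eq:stage-order} is applied only to points outside \(Z\). That is why Step 1 first passes to a cofinal subset of positive height.
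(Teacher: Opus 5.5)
Your proof is correct and follows essentially the same route as the paper. Both arguments dispose of the height-zero case via \eqref{eq:join}, pass to the positive-height part, use \eqref{eq:stage-order} with a minimal selection stage to confine a cofinal part of \(E\) to a single \(T_{\alpha_0}\), and then read off the supremum from the singleton-or-sequence structure of that stage; the only cosmetic difference is that the paper works with \(E\cap\uparrow_D x\) for a point \(x\) of minimal stage, whereas you show directly that \(E\cap T_{\alpha_0}\) is cofinal.
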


\begin{proof}
Let \(E\subseteq D\) be directed.  If every member of \(E\) has height
zero, formula~\eqref{eq:join} shows that \(\sup_LE\in Z\subseteq D\).
Otherwise choose a positive-height point \(x\in E\) whose selection
stage \(\alpha_x\) is least among the positive-height members of \(E\).
Then
\[
 E_x=E\cap\uparrow_Dx
\]
is directed and cofinal in \(E\).  Every \(y\in E_x\) has positive
height.  Minimality gives \(\alpha_y\geq\alpha_x\), while
\eqref{eq:stage-order} and \(x\leq y\) exclude
\(\alpha_x<\alpha_y\).  Hence \(E_x\subseteq T_{\alpha_x}\).

If \(T_{\alpha_x}\) is a singleton, its point is the supremum of
\(E_x\).  Otherwise \(T_{\alpha_x}\) is the range of a strictly
increasing sequence \((r_n)\) whose supremum is a point selected at an
earlier stage.  The set of indices of terms belonging to \(E_x\) either
has a largest element or is unbounded in \(\NN\).  In the first case
\(E_x\) has a greatest element; in the second its supremum is
\(\sup_nr_n\), which belongs to \(D\).  Thus \(\sup_LE_x\in D\).
Cofinality gives \(\sup_LE=\sup_LE_x\), proving that \(D\) is a
sub-dcpo.
\end{proof}

\begin{proposition}\label{prop:two-conditions}
The sub-dcpo \(D\) satisfies conditions
\ref{cond:density} and \ref{cond:common-lower} of
Theorem~\ref{thm:transfer}.
\end{proposition}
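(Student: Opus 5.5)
We verify the two conditions of Theorem~\ref{thm:transfer} separately, using the bookkeeping: every requirement in \(\mathcal I_0\cup\mathcal I_1\) occurs cofinally often among the \(R_\alpha\).

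\emph{Condition~\ref{cond:density}.} Let \(p\in V\in\sigma(L)\).
\begin{itemize}
\item If \(\height(p)=0\), then \(p\in Z\subseteq D\), and we take \(d=p\).
\item If \(\height(p)>0\), Proposition~\ref{prop:tail-selection}\ref{sel:reference} gives \(b\in\mathcal R\cap V\) with \(b\leq p\). Proposition~\ref{prop:ambient-properties} gives a basic \(W\in\mathcal B\) with \(b\in W\subseteq V\), so \((0,b,W)\in\mathcal I_0\). This requirement occurs at some stage \(\alpha\). There Lemma~\ref{lem:insertion}\ref{ins:point} inserts a point \(s\in W\cap\downarrow_L b\). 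Hence \(s\in D\cap V\) and \(s\leq b\leq p\).
\end{itemize}

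\emph{Condition~\ref{cond:common-lower}, height zero.} Let \(U\in\sigma(D)\), \(q\in U\), and let \(E\subseteq L\) be directed with \(q\leq\sup_LE\). Suppose \(\height(q)=0\). For \(M\geq2\), let \(z_M\in Z\) keep the coordinates \(A^q_j\) for \(j<M\) and be empty for \(j\geq M\). Then \((z_M)_M\) is an increasing chain in \(Z\subseteq D\). By \eqref{eq:join} its supremum in \(L\) is \(q\), and this is also its supremum in \(D\) because \(D\) is a sub-dcpo. Scott openness of \(U\) in \(D\) gives some \(z_M\in U\), and \(z_M\leq q\). The finitely many elements of the coordinates of \(z_M\) each occur in the corresponding coordinate of some member of \(E\), by \eqref{eq:join} and \(q\leq\sup_LE\). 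Directedness gives a single \(e\in E\) above these members. Height \(0\) imposes no constraint, so \(z_M\leq e\).

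\emph{Condition~\ref{cond:common-lower}, positive height.} Now let \(\height(q)=t>0\), so \(q=p_\delta\) for some \(\delta\), and \(t<1\). We argue in two steps.

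First, suppose additionally that \(\height(\sup_LE)>t\). Then some \(e_0\in E\) has \(\height(e_0)>r\) for a rational \(r>t\). Apply Proposition~\ref{prop:tail-selection}\ref{sel:reference} to \(e_0\in H_r\). This yields \(b\in\mathcal R\) with \(b\leq e_0\) and \(\height(b)>r>t\). The requirement \((1,\delta,b)\) occurs at some stage later than \(\alpha_q\). At that stage Lemma~\ref{lem:insertion}\ref{ins:sequence} inserts a strictly increasing sequence with supremum \(q\). Its tail lies in \(D\) and is directed with supremum \(q\) in \(D\), so some term \(r_n\) lies in \(U\). The ``moreover'' clause of the lemma gives \(e\in E\) with \(r_n\leq e\). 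Also \(r_n\leq q\), so \(u=r_n\) works.

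Second, the general case, where we may only have \(\height(\sup_LE)\geq t\). This is the main obstacle, since then no member of \(E\) need reach height \(t\). The remedy is the extra reference point \(\top_L\in\mathcal R^*\). The requirement \((1,\delta,\top_L)\) is always eligible, because \(\height(\top_L)=1\geq t\). It therefore inserts a sequence \(r'_n\uparrow q\) of points of heights \(u_n<t\). As before, some \(q'=r'_n\) lies in \(U\), and it is itself a selected point. Now \(q'\leq q\leq\sup_LE\) and \(\height(q')<t\leq\height(\sup_LE)\). So the first step applies to \(q'\) and produces \(u\in U\) and \(e\in E\) with \(u\leq q'\leq q\) and \(u\leq e\). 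This completes the verification of condition~\ref{cond:common-lower}.
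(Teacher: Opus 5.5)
Your proof is correct and follows the paper's argument. Density goes through a reference point $b\in\mathcal R$ and the requirement $(0,b,W)$. In the positive-height case, the $(1,\delta,\top_L)$-sequence drops you strictly below $\height(q)$, and then a requirement $(1,\delta_{q'},b)$ with $b\leq e_0$ and the final clause of Lemma~\ref{lem:insertion}\ref{ins:sequence} finish the argument; the paper simply inlines your ``first step''.

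The only real deviation is the height-zero case. There you use the finite-prefix truncations $z_M\uparrow q$ and the join formula, whereas the paper uses $\{q\wedge e:e\in E\}$ and the identity \eqref{eq:ambient-meet-continuity}. Your version makes that case independent of Lawson's monotone-net lemma.
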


\begin{proof}
For condition \ref{cond:density}, let \(p\in V\in\sigma(L)\).  If
\(p\in Z\), use \(d=p\).  Otherwise choose a basis member
\(W\in\mathcal B\) with \(p\in W\subseteq V\).  By
Proposition~\ref{prop:tail-selection}\ref{sel:reference}, choose
\(b\in\mathcal R\cap W\) with \(b\leq p\).  The requirement
\((0,b,W)\) occurs cofinally often, and each occurrence inserts a point
\(d\in W\cap\downarrow b\).  Hence \(d\in D\cap V\) and
\(d\leq p\).

For condition \ref{cond:common-lower}, let \(q\in U\in\sigma(D)\), and
let \(E\subseteq L\) be directed with \(q\leq\sup_LE\).  Suppose first
that \(q\in Z\).  The directed set
\[
 F=\{q\wedge e:e\in E\}
\]
lies in \(Z\subseteq D\).  By
\eqref{eq:ambient-meet-continuity},
\[
 \sup_LF=q\wedge\sup_LE=q.
\]
Since \(D\) is a sub-dcpo, \(\sup_DF=q\).  Scott openness of \(U\)
gives some \(q\wedge e\in U\), which is below both \(q\) and \(e\).

Now suppose that \(q\) has positive height.  Once \(q=p_\delta\) has
been selected, the requirement \((1,\delta,\top_L)\) is eligible and
occurs again.  It inserts a sequence \(p_n\uparrow q\) in \(D\).
Choose a term \(p\in U\); then \(\height(p)<\height(q)\).  Since
\(q\leq\sup_LE\), choose \(e_0\in E\) with
\(\height(e_0)>\height(p)\).  Choose a rational number \(r\) with
\(\height(p)<r<\height(e_0)\), and apply
Proposition~\ref{prop:tail-selection}\ref{sel:reference} to
\(e_0\) and the Scott-open set
\(H_r=\{x\in L:\height(x)>r\}\).  We obtain
\(b\in\mathcal R\) such that
\[
 b\leq e_0,
 \qquad
 \height(b)>r>\height(p).
\]
The requirement \((1,\delta_p,b)\) occurs again and inserts a
sequence \(r_k\uparrow p\).  Some \(r_k\) lies in \(U\).  The final
part of Lemma~\ref{lem:insertion}\ref{ins:sequence}, applied to
\(p\leq q\leq\sup_LE\) and \(b\leq e_0\in E\), gives
\(r_k\leq e\) for some \(e\in E\).  Since also \(r_k\leq p\leq q\),
this is the required common lower point.
\end{proof}

\begin{theorem}\label{thm:main}
There exist a complete lattice \(L\) and a proper sub-dcpo
\(D\subsetneq L\) with the following properties.
\begin{enumerate}
\item \(\Sigma L\) is compact, second countable, locally compact,
core-compact and sober.
\item The dcpo \(D\) is meet-continuous, and \(\Sigma D\) is compact,
second countable and core-compact, but neither sober nor locally compact.
\item Restriction induces isomorphisms
\[
 \sigma(L)\cong\sigma(D),\qquad
 \Gamma(L)\cong\Gamma(D),
\]
but \(L\not\cong D\).
\end{enumerate}
Consequently, not every core-compact meet-continuous dcpo is sober, and
a locally compact sober dcpo need not be \(C_\sigma\)-unique.
\end{theorem}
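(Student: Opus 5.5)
The plan is to assemble the main theorem from the results already in place, with no new construction. Take \(L\) to be the complete lattice of Section~\ref{sec:ambient}, which is a complete lattice by Lemma~\ref{lem:lattice}. Take \(D\) to be the set defined in \eqref{eq:D-definition}. Item (i) is then exactly Proposition~\ref{prop:ambient-properties}.

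For item (ii), I will check the hypotheses of Theorem~\ref{thm:transfer} one at a time.
\begin{itemize}
\item \(L\) has least and greatest elements \(\bot_L,\top_L\), and \(\Sigma L\) is second countable and core-compact by Proposition~\ref{prop:ambient-properties}.
\item \(D\) is a sub-dcpo by Proposition~\ref{prop:D-subdcpo}.
\item \(\bot_L=(0,(\varnothing)_j)\in Z\subseteq D\).
\item \(\top_L\notin D\), since every selected point has height strictly between \(0\) and \(1\) (by Lemma~\ref{lem:insertion}), and \(Z\) consists of height-zero points.
\item Conditions \ref{cond:density} and \ref{cond:common-lower} hold by Proposition~\ref{prop:two-conditions}.
\end{itemize}
Theorem~\ref{thm:transfer} then gives that \(D\) is meet-continuous and that \(\Sigma D\) is compact, second countable, core-compact and non-sober. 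Corollary~\ref{cor:not-locally-compact} adds that \(\Sigma D\) is not locally compact.

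For item (iii), the isomorphism \(\sigma(L)\cong\sigma(D)\) is the restriction map \(\rho\) of Theorem~\ref{thm:transfer}. Passing to complements, the corresponding map on closed sets is \(C\mapsto C\cap D\), so \(\Gamma(L)\cong\Gamma(D)\) is also induced by restriction. The same theorem gives \(L\not\cong D\). Properness \(D\subsetneq L\) follows from \(\top_L\in L\setminus D\).

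The two consequences are then immediate:
\begin{itemize}
\item \(D\) is a core-compact meet-continuous dcpo that is not sober.
\item \(L\) is a locally compact sober dcpo with \(\Gamma(L)\cong\Gamma(D)\) but \(L\not\cong D\), so \(L\) is not \(C_\sigma\)-unique.
\end{itemize}

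The only step needing care is the bookkeeping behind \(\top_L\notin D\). I will verify that both parts of Lemma~\ref{lem:insertion} really produce points of height in \((0,1)\).
\begin{itemize}
\item Part \ref{ins:point} states this explicitly.
\item In part \ref{ins:sequence} the heights are \(u_n=t(1-2^{-n})\). These are positive and below \(t<1\), and \(t\) is the height of an earlier selected point, which is itself in \((0,1)\) by induction on stages.
\end{itemize}
Everything else is direct citation, since the substantive work sits in Propositions~\ref{prop:tail-selection}, \ref{prop:D-subdcpo} and \ref{prop:two-conditions}.
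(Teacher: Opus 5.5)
Your proposal is correct and follows the paper's proof essentially verbatim. Like the paper, you take \(L\) from Section~\ref{sec:ambient} and \(D\) from \eqref{eq:D-definition}, check the hypotheses of Theorem~\ref{thm:transfer} via Propositions~\ref{prop:ambient-properties}, \ref{prop:D-subdcpo} and \ref{prop:two-conditions}, and use Corollary~\ref{cor:not-locally-compact} for the failure of local compactness. Your induction on stages showing that inserted points have height in \((0,1)\) only spells out the paper's one-line remark that \(\top_L\notin D\).
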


\begin{proof}
Take the complete lattice \(L\) of Section~\ref{sec:ambient} and the
sub-dcpo \(D\) of Section~\ref{sec:recursive}.  The properties of
\(L\) are given by Proposition~\ref{prop:ambient-properties}.
Proposition~\ref{prop:D-subdcpo} shows that \(D\) is a sub-dcpo;
clearly \(\bot_L\in Z\subseteq D\), while all selected positive-height
points have height below \(1\), so \(\top_L\notin D\).
Proposition~\ref{prop:two-conditions} verifies the two hypotheses of
Theorem~\ref{thm:transfer}.  That theorem gives meet-continuity,
compactness, second countability, core-compactness, non-sobriety and the
two lattice isomorphisms.  Corollary~\ref{cor:not-locally-compact}
gives failure of local compactness.  Finally, \(L\) has a greatest
element and \(D\) does not, so they are not order isomorphic.
\end{proof}

\appendix
\renewcommand{\thetheorem}{\Alph{section}.\arabic{theorem}}
\section{Proof of the tail-selection lemma}\label{app:selection}

This appendix proves Proposition~\ref{prop:tail-selection}.  It is the
only part of the argument using finite-set localization.  Readers
interested only in the domain-theoretic construction may take that
proposition as a black box.

We use standard cardinal arithmetic for infinite cardinals:
\(\mu\cdot\aleph_0=\mu\) and \(\mu^n=\mu\) for every integer
\(n\geq1\) \cite[Chapter~3]{Jech2003}.

\begin{definition}\label{def:localization}
Let \(Y=(Y_j)_{j\geq2}\) be a sequence of nonempty finite sets, and
let \(w=(w_j)_{j\geq2}\) be a sequence of positive integers.
A width-\(w\) slalom on \(Y\) is a sequence \(\psi\) such that
\(\psi(j)\subseteq Y_j\) and \(|\psi(j)|\leq w_j\) for every \(j\).
It localizes, or captures, \(y\in\prod_{j\geq2}Y_j\) if
\(y(j)\in\psi(j)\) for all but finitely many \(j\). A family is capturing if every point of this
product is captured by some member of the family. Write \(\loc(Y,w)\)
for the least cardinality of a width-\(w\) capturing family on \(Y\).
\end{definition}

The coordinate sets \(\psi(j)\) may be empty. In the notation of
\cite[Definition~3.1]{CardonaMejia2023}, \(\loc(Y,w)\) is
\(\mathfrak d^{\mathrm{Lc}}_{Y,w}\), with the coordinates indexed
by \(\{2,3,\ldots\}\). Set
\begin{equation}\label{eq:lambda}
 \lambda=\loc\bigl((N_j)_{j\geq2},(2^j)_{j\geq2}\bigr).
\end{equation}

The lower bound below is the standard diagonal argument for slalom
covers; compare \cite[Fact~1.4]{GoldsternShelah1993}. We include it
for the eventual-membership convention used here.

\begin{lemma}\label{lem:lambda-bounds}
One has \(\aleph_1\leq\lambda\leq\mathfrak c\).
\end{lemma}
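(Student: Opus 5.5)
The upper bound \(\lambda\leq\mathfrak c\) comes from exhibiting one capturing family of that size. The lower bound \(\aleph_1\leq\lambda\) comes from a diagonal argument against countable families. Nonemptiness of the relevant product is trivial, since each \(N_j\) is nonempty.

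For the upper bound, we use the family of all width-\(2^j\) slaloms \(\psi_y\) with \(\psi_y(j)=\{y(j)\}\), indexed by \(y\in\prod_{j\geq2}N_j\). Each \(\psi_y\) has \(|\psi_y(j)|=1\leq 2^j\), and it captures \(y\), in fact at every coordinate. So this family is capturing. Its cardinality is at most \(\prod_{j\geq2}|N_j|\leq\aleph_0^{\aleph_0}=\mathfrak c\), since each \(N_j\) is finite. Alternatively, one can note that the set of all width-\(w\) slaloms is a subset of \(\prod_j\mathcal P(N_j)\), which has cardinality at most \(\mathfrak c\). Hence \(\lambda\leq\mathfrak c\).

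For the lower bound, it suffices to show that no countable family \(\{\psi_k:k\in\NN\}\) of width-\(2^j\) slaloms is capturing. A finite family is handled by repeating members, and the empty family captures nothing. We define \(y\in\prod_{j\geq2}N_j\) diagonally by choosing
\[
 y(j)\in N_j\setminus\bigcup_{k\leq j}\psi_k(j).
\]
This choice is possible because the union has at most \((j+1)2^j\) elements. Since \(|N_j|=d_j=2^{2^j}\), we only need \((j+1)2^j<2^{2^j}\) for all \(j\geq2\). This holds because \(\log_2\bigl((j+1)2^j\bigr)=j+\log_2(j+1)<2^j\) for \(j\geq2\); at \(j=2\) the comparison is \(12<16\), and the gap only grows. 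Then for each fixed \(k\) and every \(j\geq\max\{k,2\}\), we have \(y(j)\notin\psi_k(j)\). So \(y\) fails to lie in \(\psi_k(j)\) for infinitely many \(j\), and no \(\psi_k\) captures \(y\). Therefore every capturing family is uncountable, giving \(\lambda\geq\aleph_1\).

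The only step needing care is the counting inequality in the diagonal step, which uses the doubly exponential size of \(N_j\) against the width \(2^j\) times the number of slaloms handled by stage \(j\). The eventual-membership convention is what makes it enough to avoid \(\psi_k\) on a tail of coordinates.
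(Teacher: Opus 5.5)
Your proof is correct and matches the paper's. The upper bound uses the same singleton slaloms $\psi_y(j)=\{y(j)\}$, and the lower bound is the same diagonal argument, except that the paper partitions the coordinates into infinite sets $J_m$ and avoids only $\psi_m$ on $J_m$ (needing just $2^j<2^{2^j}$), whereas you avoid $\bigcup_{k\leq j}\psi_k(j)$ at every coordinate, which costs the slightly stronger count $(j+1)2^j<2^{2^j}$ but makes $y$ escape each $\psi_k$ on a cofinite set rather than merely an infinite one.
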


\begin{proof}
Let \((\psi_m)_{m\geq1}\) be a countable family of slaloms on
\((N_j)_{j\geq2}\), with \(|\psi_m(j)|\leq2^j\). A nonempty finite
family can also be listed in this way by repetition. Partition
\(\{2,3,\ldots\}\) into pairwise disjoint infinite sets \(J_m\).
For \(j\in J_m\), choose \(x(j)\in N_j\setminus\psi_m(j)\).
This is possible because \(2^j<|N_j|=2^{2^j}\). These choices define
\(x\in\prod_{j\geq2}N_j\), and \(\psi_m\) fails to capture \(x\)
at all indices in \(J_m\). Thus no countable family is capturing;
the empty family is not capturing either. This proves
\(\lambda\geq\aleph_1\).

For each \(x\in\prod_{j\geq2}N_j\), the slalom
\(\psi_x(j)=\{x(j)\}\) captures \(x\). Since each \(N_j\) is a
subset of \(\NN\), the product has cardinality at most
\(|\NN^{\NN}|=\mathfrak c\). The family of these singleton slaloms
therefore proves \(\lambda\leq\mathfrak c\).
\end{proof}

The next argument applies one fixed capturing family twice. It combines
restriction to an infinite coordinate set and pullback along injections
\cite[Facts~3.2 and~3.4]{CardonaMejia2023} with finite-block coding and
projection \cite[Lemma~3.14]{CardonaMejia2023}. The block sizes and
width estimates required here are verified directly.

\begin{lemma}\label{lem:alphabet-pullback}
Let \(Y_j\) be nonempty finite sets with \(|Y_j|\leq d_{2j}\) for
\(j\geq2\). For every \(\varepsilon>0\), there is a capturing family on
\((Y_j)_{j\geq2}\) of cardinality at most \(\lambda\) whose coordinate
sets at \(j\) have size at most \(\lfloor2^{\varepsilon j}\rfloor\).
\end{lemma}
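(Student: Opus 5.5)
The plan is to apply the fixed capturing family
\(\Phi\) witnessing \eqref{eq:lambda} twice, in two different ways. The first use captures the points themselves with wide slaloms. The second captures the \emph{positions} of the points inside those slaloms, after many target coordinates have been packed into one base coordinate. Since the resulting family is indexed by pairs from \(\Phi\times\Phi\), its cardinality is at most \(\lambda\cdot\lambda=\lambda\). Throughout, let \(\Phi\) be a capturing family on \((N_i)_{i\geq2}\) of width \((2^i)_{i\geq2}\) with \(|\Phi|=\lambda\).

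\emph{Step 1 (pullback along \(j\mapsto2j\)).} Since \(|Y_j|\leq d_{2j}=|N_{2j}|\), fix injections \(e_j:Y_j\to N_{2j}\). For \(y\in\prod_jY_j\), define \(\tilde y\in\prod_iN_i\) by \(\tilde y(2j)=e_j(y(j))\), with arbitrary values at odd \(i\). For each \(\varphi\in\Phi\), put \(\psi_\varphi(j)=e_j^{-1}(\varphi(2j))\subseteq Y_j\). Then \(|\psi_\varphi(j)|\leq2^{2j}\), and if \(\varphi\) captures \(\tilde y\), then \(\psi_\varphi\) captures \(y\). This gives a capturing family on \(Y\) of width \(4^j\) and size \(\leq\lambda\). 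The width is still far too large.

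\emph{Step 2 (position coding in blocks).} Fix \(\psi=\psi_\varphi\), and enumerate each \(\psi(j)\) injectively into \(\{0,\ldots,4^j-1\}\). For \(y\) captured by \(\psi\), let \(k_y(j)\) be the index of \(y(j)\) in \(\psi(j)\) when \(y(j)\in\psi(j)\), and \(0\) otherwise. We must capture \(k_y\) with width \(\lfloor2^{\varepsilon j}\rfloor\).

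Choose integers \(a_{i_0}<a_{i_0+1}<\cdots\) with \(a_i\geq i/\varepsilon\) and \(a_{i+1}-a_i\leq\lceil1/\varepsilon\rceil+1\). Set \(B_i=[a_i,a_{i+1})\). These blocks partition all \(j\geq a_{i_0}\), and every \(j\in B_i\) satisfies \(2^i\leq2^{\varepsilon j}\), so \(2^i\leq\lfloor2^{\varepsilon j}\rfloor\).

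The tuples \((k(j))_{j\in B_i}\) range over a set of size
\[
 \prod_{j\in B_i}4^j=2^{2\sum_{j\in B_i}j}\leq2^{O(i/\varepsilon^2)}\leq2^{2^i}=|N_i|
\]
for all large \(i\); enlarge \(i_0\) so that this holds. We can therefore code the tuple injectively as a point of \(N_i\) and obtain \(\hat k\in\prod_iN_i\), with arbitrary values for \(i<i_0\).

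For \(\varphi'\in\Phi\), decode \(\varphi'(i)\) into at most \(2^i\) tuples and project to each \(j\in B_i\). This gives sets of at most \(2^i\leq\lfloor2^{\varepsilon j}\rfloor\) indices. Pulling these indices back through the enumeration of \(\psi(j)\) gives \(\chi_{\varphi,\varphi'}(j)\subseteq Y_j\) of the required size. For the finitely many \(j<a_{i_0}\), set \(\chi(j)=\varnothing\); this is allowed, and harmless because capture is only eventual.

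\emph{Verification.} Given \(y\), choose \(\varphi\) capturing \(\tilde y\) and then \(\varphi'\) capturing \(\hat k_y\). For all large \(j\), we have \(y(j)\in\psi_\varphi(j)\), so \(k_y(j)\) really is the index of \(y(j)\). Also, for the block \(B_i\ni j\) with \(i\) large, the coded tuple lies in \(\varphi'(i)\). Hence \(y(j)\in\chi_{\varphi,\varphi'}(j)\) eventually.

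The main obstacle is Step 2's bookkeeping. A pure pullback along an injection cannot work, because we would need injective \(i_j\) with \(i_j\leq\varepsilon j\). The blocks must therefore be long enough to be injectively indexed by \(i\), yet short enough that the coded alphabet \(2^{2\sum_{B_i}j}\) fits into \(2^{2^i}\). The choice \(|B_i|\approx1/\varepsilon\) with \(a_i\approx i/\varepsilon\) makes both inequalities hold for large \(i\).
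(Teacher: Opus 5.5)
Your proof is correct and follows essentially the paper's route. You pull back along \(j\mapsto2j\) to reach width \(2^{2j}\), code finite blocks of coordinates into single coordinates of \((N_i)_{i\geq2}\), capture the coded sequence with a second member of the same family, then decode and project, so pairs from \(\Phi\times\Phi\) give cardinality \(\lambda^2=\lambda\). The only real difference is the choice of blocks: the paper uses dyadic blocks \(I_n=[2^n,2^{n+1})\) coded into coordinate \(2n+2\), which gives widths \(2^{2n+2}\leq4j^2\) with only the finite cutoff depending on \(\varepsilon\), whereas your bounded-length blocks near \(i/\varepsilon\) give width \(2^i\leq2^{\varepsilon j}\) tailored to each \(\varepsilon\). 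Coding indices rather than the tuples in \(\prod_{j\in I_n}T_\varphi(j)\) is cosmetic.
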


\begin{proof}
Fix a capturing family \(\mathcal F\) of cardinality \(\lambda\)
on \((N_i)_{i\geq2}\), with \(|\varphi(i)|\leq2^i\) for every
\(\varphi\in\mathcal F\). Choose injections
\(\iota_j:Y_j\to N_{2j}\) and points \(y_j^0\in Y_j\).
For \(\varphi\in\mathcal F\), put
\[
 R_\varphi(j)=\{y\in Y_j:\iota_j(y)\in \varphi(2j)\},\qquad
 T_\varphi(j)=
 \begin{cases}
 R_\varphi(j),&R_\varphi(j)\ne\varnothing,\\
 \{y_j^0\},&R_\varphi(j)=\varnothing.
 \end{cases}
\]
Then \(T_\varphi(j)\) is nonempty and \(|T_\varphi(j)|\leq2^{2j}\).
For every \(x\in\prod_jY_j\), some \(\varphi\in\mathcal F\) satisfies
\(x(j)\in T_\varphi(j)\) for all but finitely many \(j\). Indeed, define
\(z\in\prod_{i\geq2}N_i\) by \(z(2j)=\iota_j(x(j))\) for
\(j\geq2\), and by \(z(i)=1\) at all remaining coordinates.
A member \(\varphi\) capturing \(z\) has the stated property, because
the source indices \(2j\) are distinct.

For \(n\geq1\), let \(I_n=\{2^n,\ldots,2^{n+1}-1\}\).
These sets partition the coordinates \(j\geq2\). For each
\(\varphi\in\mathcal F\),
\[
 \log_2\left|\prod_{j\in I_n}T_\varphi(j)\right|
 \leq2\sum_{j\in I_n}j
 =3\cdot2^{2n}-2^n<2^{2n+2}
 =\log_2|N_{2n+2}|.
\]
Hence choose an injection
\[
 \eta_{\varphi,n}:\prod_{j\in I_n}T_\varphi(j)\longrightarrow N_{2n+2}.
\]
For \(\varphi,\chi\in\mathcal F\) and \(j\in I_n\), define
\[
 \widetilde\psi_{\varphi,\chi}(j)
 =\left\{v(j):v\in\prod_{k\in I_n}T_\varphi(k),\
                 \eta_{\varphi,n}(v)\in \chi(2n+2)\right\}.
\]
Injectivity of \(\eta_{\varphi,n}\) bounds the number of selected
tuples by \(|\chi(2n+2)|\); projection onto coordinate \(j\)
cannot increase this number. Hence
\[
 |\widetilde\psi_{\varphi,\chi}(j)|\leq|\chi(2n+2)|\leq2^{2n+2}.
\]
Since \((2n+2)/2^n\to0\), choose \(n_0\geq1\) such that
\(2n+2\leq \varepsilon 2^n\) for all \(n\geq n_0\). For such \(n\) and
\(j\in I_n\), the integer \(2^{2n+2}\) is at most \(2^{\varepsilon j}\),
and therefore at most \(\lfloor2^{\varepsilon j}\rfloor\).
Define \(\psi_{\varphi,\chi}(j)=\widetilde\psi_{\varphi,\chi}(j)\) on these blocks,
and \(\psi_{\varphi,\chi}(j)=\{y_j^0\}\) on the finitely many earlier
coordinates. Then \(\psi_{\varphi,\chi}\) has the required width.

To verify capture, fix \(x\in\prod_jY_j\) and choose \(\varphi\) as in
the first paragraph. Replace the finitely many entries of \(x\)
that do not lie in \(T_\varphi(j)\) by members of \(T_\varphi(j)\), obtaining
\(x'\in\prod_j T_\varphi(j)\). Define \(w\in\prod_{i\geq2}N_i\) by
\[
 w(2n+2)=\eta_{\varphi,n}\bigl((x'(j))_{j\in I_n}\bigr)\quad(n\geq1),
\]
and put \(w(i)=1\) at all other coordinates. Choose
\(\chi\in\mathcal F\) capturing \(w\). The indices \(2n+2\) are
distinct, so for all sufficiently large \(n\) the entire tuple
\((x'(j))_{j\in I_n}\) is among the tuples used to define
\(\widetilde\psi_{\varphi,\chi}\). Thus \(x'(j)\in\psi_{\varphi,\chi}(j)\) on
every sufficiently late block. Only finitely many blocks are
exceptional, and each is finite. Since \(x\) and \(x'\) differ
at only finitely many coordinates, \(\psi_{\varphi,\chi}\) captures \(x\).
The family \(\{\psi_{\varphi,\chi}:\varphi,\chi\in\mathcal F\}\) has cardinality
at most \(|\mathcal F|^2=\lambda^2=\lambda\), proving the assertion.
\end{proof}

\begin{lemma}\label{lem:approximating-family}
There is a set \(\mathcal R\subseteq L\) of at most \(\lambda\) points of positive
height such that, whenever \(p\in U\in\sigma(L)\) and
\(\height(p)>0\), some \(b\in\mathcal R\cap U\) satisfies \(b\leq p\).
\end{lemma}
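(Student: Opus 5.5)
The plan is to encode the ``tail'' of a positive-height point by the complements of its coordinates, and to approximate these complements from outside by slaloms from Lemma~\ref{lem:alphabet-pullback}. The finitely many coordinates that a Scott neighbourhood constrains are then fixed exactly, using countably many finite prefixes and rational heights.

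\emph{Step 1 (reduction to basic opens).} It suffices to treat \(U\) in the countable basis \eqref{eq:basis}. Then \(U\) requires \(E_j\subseteq A_j\) for \(j\) in a finite set \(J\), and possibly \(\height>r\). Let \(p=(t,A)\in U\) with \(t>0\). We seek \(b=(u,B)\leq p\) with \(r<u<t\), \(B_j=A_j\) for \(j\in J\), and \(u\leq c_j(B_j)\) for all \(j\). Any such \(b\) lies in \(U\).

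\emph{Step 2 (coding the complements).} Fix a rational \(s\in(0,1)\). For \(j\geq2\) let \(Y^s_j\) be the set of subsets of \(N_j\) of size at most \(\max\{q_s(j),0\}\), which contains \(\varnothing\) and so is nonempty. Since \(q_s(j)<2^j\), one has
\[
|Y^s_j|\leq (d_j+1)^{2^j}\leq 2^{2^j(2^j+1)}\leq d_{2j}
\]
for large \(j\); the finitely many small \(j\) can be handled by a harmless modification (pad with \(\{\varnothing\}\), or reindex). For each rational \(\varepsilon>0\), Lemma~\ref{lem:alphabet-pullback} gives a capturing family \(\mathcal F_{s,\varepsilon}\) on \((Y^s_j)_j\) of size at most \(\lambda\), with widths at most \(2^{\varepsilon j}\). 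For \(\psi\in\mathcal F_{s,\varepsilon}\) put
\[
 B^\psi_j=N_j\setminus\bigcup\psi(j).
\]
Then \(|N_j\setminus B^\psi_j|\leq 2^{\varepsilon j}q_s(j)\leq 2^{(1-s+\varepsilon)j}\). By \eqref{eq:cj}, this gives \(c_j(B^\psi_j)\geq s-\varepsilon-O(1/j)\), so \(c_j(B^\psi_j)\geq s-2\varepsilon\) for all large \(j\).

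\emph{Step 3 (defining \(\mathcal R\)).} For rationals \(s,\varepsilon,u\), an integer \(m\), a slalom \(\psi\in\mathcal F_{s,\varepsilon}\), and a finite prefix \(P=(P_j)_{2\leq j<m}\) with \(P_j\subseteq N_j\), form the configuration \(B_j=P_j\) for \(j<m\) and \(B_j=B^\psi_j\) for \(j\geq m\). Include \((u,B)\) in \(\mathcal R\) whenever \(u>0\) and this pair lies in \(L\). There are only countably many choices of \((s,\varepsilon,u,m,P)\), and \(\lambda\) is uncountable by Lemma~\ref{lem:lambda-bounds}. Hence \(|\mathcal R|\leq\aleph_0\cdot\lambda=\lambda\).

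\emph{Step 4 (verification).} Given \(p=(t,A)\), choose a rational \(s<t\) and then rationals \(\varepsilon>0\) and \(u\) with
\[
 r<u<s-2\varepsilon .
\]
By \eqref{eq:complement-bound}, \(N_j\setminus A_j\in Y^s_j\) for every \(j\). Some \(\psi\in\mathcal F_{s,\varepsilon}\) captures this sequence, so \(N_j\setminus A_j\in\psi(j)\), and hence \(B^\psi_j\subseteq A_j\), for all \(j\geq m_0\). Take \(m\) larger than \(m_0\), larger than \(\max J\), and beyond the threshold after which \(c_j(B^\psi_j)\geq s-2\varepsilon\). Take \(P_j=A_j\) for \(j<m\). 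On the prefix, \(c_j(A_j)\geq t>u\); on the tail, \(c_j(B_j)\geq s-2\varepsilon>u\). Thus \(b=(u,B)\in\mathcal R\). Moreover \(b\leq p\), since \(u<t\) and \(B\preceq A\), and \(b\in U\).

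The main obstacle is the cardinality bookkeeping in Step~2. One must check that \(|Y^s_j|\leq d_{2j}\), so that Lemma~\ref{lem:alphabet-pullback} applies, and that the width \(2^{\varepsilon j}\) costs only an \(\varepsilon\)-loss of height. Both reduce to the estimates \(q_s(j)<2^j\) and \(\log_2(1+2^{(1-s+\varepsilon)j})/j\to 1-s+\varepsilon\).
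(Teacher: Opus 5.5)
Your proposal is correct and is essentially the paper's proof: the complements $N_j\setminus A_j$ are captured by slaloms from Lemma~\ref{lem:alphabet-pullback}, and the union of a slalom's entries gives the omitted tail. Countably many finite prefixes and rational heights then fix the finitely many constrained coordinates. The paper works with a basic $\tau$-neighbourhood rather than the Scott basis \eqref{eq:basis}, and it couples the height to the slalom parameters via $\varepsilon=(k-h)/2$; both differences are cosmetic.

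Two small repairs are needed.

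\begin{itemize}
\item[(1)] In the chain $|Y^s_j|\le(d_j+1)^{2^j}\le2^{2^j(2^j+1)}\le d_{2j}$, the last inequality is false for every $j$, because $2^{2j}+2^j>2^{2j}$. So ``for large $j$'' plus padding does not rescue it. Instead use $q_s(j)\le 2^j-1$, which gives $|Y^s_j|\le(d_j+1)^{2^j-1}\le2^{(2^j+1)(2^j-1)}<d_{2j}$ for all $j\ge2$. No padding is then needed.
\item[(2)] In Step~4, choose $s\in(r,t)$ explicitly, so that a rational $u$ with $r<u<s-2\varepsilon$ exists.
\end{itemize}
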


\begin{proof}
For each rational pair \(0<h<k<1\), set \(\varepsilon=(k-h)/2>0\).
For each \(j\geq2\), let
\[
 Y_j=\{R\subseteq N_j:|R|\leq q_k(j)\}.
\]
This finite set is nonempty because it contains \(\varnothing\).
Put \(m_j=q_k(j)\). Since \(0<k<1\), one has
\(0\leq m_j\leq2^j-2\). Counting subsets according to their size gives
\[
 |Y_j|=\sum_{r=0}^{m_j}\binom{d_j}{r}
       \leq(m_j+1)d_j^{m_j}.
\]
Using \(m_j+1\leq2^j\) and \(\log_2d_j=2^j\), we obtain
\[
 \log_2|Y_j|
 \leq\log_2(m_j+1)+m_j2^j
 \leq j+(2^j-2)2^j<2^{2j}.
\]
Thus \(|Y_j|\leq2^{2^{2j}}=d_{2j}\).
For each pair \((h,k)\), fix a capturing family
\(\mathcal F_{h,k}\) supplied by Lemma~\ref{lem:alphabet-pullback},
of cardinality at most \(\lambda\) and with widths
\(w_j=\lfloor2^{\varepsilon j}\rfloor\).
For \(\psi\in\mathcal F_{h,k}\), put
\(H_j^\psi=\bigcup_{R\in\psi(j)}R\). Each member of \(\psi(j)\) has
size at most \(q_k(j)\), so
\begin{equation}\label{eq:approximation-bound}
 |H_j^\psi|\leq w_jq_k(j)
 \leq2^{(1-k+\varepsilon)j}\leq q_h(j)
 \quad\text{for all sufficiently large }j.
\end{equation}
To justify the last inequality including the floor and the subtracted
\(1\), set \(z_j=2^{(1-h)j}\). The identity
\(1-k+\varepsilon=1-h-\varepsilon\) gives \(2^{(1-k+\varepsilon)j}=2^{-\varepsilon j}z_j\).
Choose \(M_0=M_0(h,k)\geq2\) such that \(2^{-\varepsilon j}\leq1/2\) and
\(z_j\geq4\) for \(j\geq M_0\). Then
\[
 2^{-\varepsilon j}z_j\leq z_j/2\leq z_j-2
                 \leq\lfloor z_j\rfloor-1=q_h(j).
\]
This choice of \(M_0\) works for every \(\psi\in\mathcal F_{h,k}\).

For each such \(\psi\), each \(M\geq M_0\), and each finite collection
\((T_j)_{2\leq j<M}\) with \(T_j\subseteq N_j\) and \(c_j(T_j)\geq h\),
form the point \((h,C)\) defined by
\[
 C_j=\begin{cases}
 T_j,&2\leq j<M,\\
 N_j\setminus H_j^\psi,&j\geq M.
 \end{cases}
\]
At the first coordinates, \(c_j(C_j)\geq h\) by the choice of \(T_j\).
At the remaining coordinates,
\(|N_j\setminus C_j|=|H_j^\psi|\leq q_h(j)\), so
\eqref{eq:complement-bound} gives the same inequality.
Thus \((h,C)\in L\) and has positive height.
Let \(\mathcal R\) consist of all points obtained in this way as
\(h,k,\psi,M\) and the finite prefix vary over the choices above.
For a fixed \(M\) there are finitely many choices of \((T_j)_{2\leq j<M}\).
There are countably many cutoffs \(M\) and rational pairs \((h,k)\), and at
most \(\lambda\) choices of \(\psi\) for each pair. Consequently
\[
 |\mathcal R|\leq\aleph_0\cdot\lambda\cdot\aleph_0=\lambda.
\]

It remains to verify the required approximation property.
Let \(p=(t,A)\in U\in\sigma(L)\) with \(t>0\).
By Lemma~\ref{lem:scott-ambient}, \(U\) is \(\tau\)-open.
Choose a relatively open interval \(I\subseteq[0,1]\) and a finite set
\(J\subseteq\{2,3,\ldots\}\) such that
\[
 p\in W=\{(s,C)\in L:s\in I,\ C_j=A_j\ (j\in J)\}
 \subseteq U.
\]
Choose rational numbers \(0<h<k<t\) with \(h\in I\); then \(k<1\).
Since \(c_j(A_j)\geq t>k\), each \(N_j\setminus A_j\) belongs to \(Y_j\).
Choose \(\psi\in\mathcal F_{h,k}\) capturing this sequence, and choose
\(M\geq M_0(h,k)\) greater than every index in \(J\) and every capture
exception. In the construction of \(\mathcal R\), use the prefix \(T_j=A_j\)
for \(j<M\). This prefix is allowed because \(c_j(A_j)\geq t>h\).
For \(j\geq M\), capture gives
\[
 N_j\setminus A_j\in\psi(j),\qquad
 N_j\setminus A_j\subseteq H_j^\psi,
 \qquad N_j\setminus H_j^\psi\subseteq A_j.
\]
Thus the resulting \(b=(h,C)\in\mathcal R\) satisfies \(h<t\) and
\(C_j\subseteq A_j\) for every \(j\), so \(b\leq p\).
It also satisfies \(h\in I\) and \(C_j=A_j\) for \(j\in J\), so
\(b\in W\subseteq U\). This proves the lemma.
\end{proof}

\begin{proof}[Proof of Proposition~\ref{prop:tail-selection}]
Take \(\kappa=\lambda\), where \(\lambda\) is defined in
\eqref{eq:lambda}, and take the reference family \(\mathcal R\) from
Lemma~\ref{lem:approximating-family}.  Lemma~\ref{lem:lambda-bounds}
gives \(\aleph_1\leq\kappa\leq\mathfrak c\), and
Lemma~\ref{lem:approximating-family} is exactly condition
\ref{sel:reference}.

Let \(H\subseteq L_+\) have cardinality less than \(\kappa\).  For
\(x=(t_x,A^x)\in H\), define the slalom
\[
 \psi_x(j)=N_j\setminus A_j^x.
\]
Since \(t_x>0\), equation~\eqref{eq:complement-bound} gives
\(|\psi_x(j)|<2^j\), hence \(|\psi_x(j)|\leq2^j\).  The family
\(\{\psi_x:x\in H\}\) has cardinality less than \(\lambda\), so it is
not capturing by the definition of \(\lambda\).  Choose
\(f\in\prod_{j\geq2}N_j\) captured by none of these slaloms.  For each
\(x\in H\), there are infinitely many \(j\) with
\(f(j)\notin\psi_x(j)\), equivalently \(f(j)\in A_j^x\).  This is
condition \ref{sel:avoidance}.
\end{proof}

\section*{Declaration of generative AI use}

ChatGPT was utilized to explore these constructions, verify intermediate estimates, and assist in organizing and presenting the manuscript. Additionally, further constructions and proofs are available in the supplementary materials. Both the supplementary materials and the Lean formalization code will be made available at \url{https://github.com/ChanYuxu/Recent-Progress-on--Domain-Theory}.

\begingroup
\small
\raggedright
\setlength{\bibsep}{2pt plus .5pt}

\endgroup

\end{document}